\newtheorem{defn}{Definition}
\newtheorem{thm}{Theorem}
\newtheorem{lem}{Lemma}
\newtheorem{prop}{Proposition}
\newtheorem{rmk}{Remark}
\newcommand{\Sing}{\mathcal{S}}
\newcommand{\Doub}{\mathcal{D}}
\newcommand{\R}{\mathbb{R}}
\newcommand{\bi}{\begin{itemize}}
\newcommand{\ei}{\end{itemize}}
\newcommand{\ben}{\begin{enumerate}}
\newcommand{\een}{\end{enumerate}}
\newcommand{\be}{\begin{equation}}
\newcommand{\ee}{\end{equation}}
\newcommand{\bea}{\begin{eqnarray}} 
\newcommand{\eea}{\end{eqnarray}}
\newcommand{\ba}{\begin{align}} 
\newcommand{\ea}{\end{align}}
\newcommand{\bse}{\begin{subequations}} 
\newcommand{\ese}{\end{subequations}}
\newcommand{\bc}{\begin{center}}
\newcommand{\ec}{\end{center}}
\newcommand{\bfi}{\begin{figure}}
\newcommand{\efi}{\end{figure}}
\newcommand{\ca}[2]{\caption{#1 \label{#2}}}
\newcommand{\ig}[2]{\includegraphics[#1]{#2}}
\newcommand{\tbox}[1]{{\mbox{\tiny #1}}}
\newcommand{\mbf}[1]{{\mathbf #1}}
\newcommand{\pO}{{\partial\Omega}}
\newcommand{\eO}{{\R^2\backslash\overline{\Omega}}}   
\newcommand{\half}{\mbox{\small $\frac{1}{2}$}}
\newcommand{\sfrac}[2]{\mbox{\small $\frac{#1}{#2}$}}
\DeclareMathOperator{\re}{Re}
\DeclareMathOperator{\im}{Im}
\DeclareMathOperator{\Ai}{Ai}
\DeclareMathOperator{\Bi}{Bi}
\newcommand{\ui}{u^\tbox{inc}}
\newcommand{\xx}{\mbf{x}}
\newcommand{\yy}{\mbf{y}}
\newcommand{\zz}{\mbf{z}}
\newcommand{\nn}{\mbf{n}}
\newcommand{\kk}{\mbf{k}}
\newcommand{\pp}{\mbf{p}}
\newcommand{\xp}{\mbf{x}'}
\newcommand{\yp}{\mbf{y}'}
\newcommand{\kp}{\mbf{k}'}
\newcommand{\infint}{\int_{-\infty}^{\infty}}      
\newcommand{\fref}[1]{Fig.~\ref{#1}}          
\newcommand{\sref}[1]{Sec.~\ref{#1}}          
\newcommand{\tref}[1]{Table~\ref{#1}}
\newcommand{\dref}[1]{Definition~\ref{#1}}
\newcommand{\rref}[1]{Remark~\ref{#1}}
\newcommand{\al}{\alpha}               
\newcommand{\eps}{\varepsilon}
\journal{Journal of Computational Physics}
\begin{document}

\begin{frontmatter}



\title{High-order boundary integral equation solution of high frequency wave
scattering from obstacles in an unbounded linearly stratified medium}


\author[d]{Alex H. Barnett}
\author[s]{Bradley J. Nelson}
\author[v]{J. Matthew Mahoney}
\address[d]{Department of Mathematics, Dartmouth College, Hanover, NH, 03755}
\address[s]{Institute for Computational and Mathematical Engineering, Stanford University, Stanford, CA, 94305}
\address[v]{Department of Neurological Sciences, University of Vermont, Burlington, VT, 05405}

\begin{abstract}
We apply boundary integral equations for the first time to the
two-dimensional scattering of time-harmonic waves from a smooth obstacle
embedded in a continuously-graded unbounded medium.
In the case we solve the square of the wavenumber
(refractive index) varies linearly
in one coordinate, i.e. $(\Delta + E + x_2)u(x_1,x_2) = 0$
where $E$ is a constant;
this models quantum particles of fixed energy
in a uniform gravitational field,
and has broader applications to stratified media in
acoustics, optics and seismology.
We evaluate the fundamental solution efficiently with exponential accuracy via
numerical saddle-point integration, 
using the truncated trapezoid rule with typically $10^2$ nodes,
with an effort that is {\em independent} of the frequency parameter $E$.
By combining with
high-order Nystr\"om quadrature, we are able to solve the scattering
from obstacles 50 wavelengths across to 11 digits of accuracy in under a minute
on a desktop or laptop.
\end{abstract}

\begin{keyword}
scattering \sep acoustic \sep Helmholtz \sep graded-index \sep refraction
\sep gravity \sep quantum \sep integral equation


\MSC[2010] 65N38  
\sep 65N80   
\sep 34M60  
\sep 65D20  

\end{keyword}

\end{frontmatter}



\section{Introduction}
Problems involving
time-harmonic waves in media whose wave speed or refractive
index varies continuously in a layered fashion are common in both the natural
and engineered worlds.
In acoustics, underwater sound propagation
\cite{kellerunderwater,etter},
and environmental noise modeling in the presence
of a thermal gradient \cite{meteobem}
both involve continuously stratified wave speeds.
In electromagnetics,
continuously stratified media occur in
ionospheric propagation \cite{hartree}
and nano-scale optical devices (see \cite{nanofilms} and references within).
In elastodynamics,
similar models play important roles in seismology since
wave speed grows in a piecewise continuous fashion with
with depth into the earth \cite[Sec.~2.5.3]{chapman},
and in designing functionally graded materials \cite{suresh}.
In quantum physics the same equations as in acoustics arise when
gravitational or electric fields
influence the motion of fixed energy particles \cite{bracher}.
In each case, when the varying medium is acoustically large (many wavelengths 
across), or unbounded,
accurate numerical solution of wave propagation and scattering
remains challenging.

We will solve the following scalar-wave exterior
boundary value problem (BVP),
where $\Omega\subset\R^2$ is a given
bounded obstacle with smooth boundary $\pO$,
and $f$ is smooth Dirichlet data on $\pO$,
\bea
\bigl(\Delta + k(x_2)^2\bigr) u(x_1,x_2) & = & 0
\qquad \xx:=(x_1,x_2) \in \eO~,
\label{lhelm}
\\
u &=& f \qquad \mbox{ on } \pO~,
\label{bc}
\eea
where $\Delta:=\partial^2/\partial x_1^2 +\partial^2/\partial x_2^2$
is the Laplace operator, with the specific vertical wavenumber variation
$k(x_2)$ given by
\be
k(x_2)^2 = E + x_2
~,
\label{ourk}
\ee
and outgoing radiation conditions for $u$.
The latter, given in Definition~\ref{d:rbcs},
are required for uniqueness of the solution.
In applications the potential $u$ represents
pressure, wavefunction, or a component of electric or magnetic field.

The general relationship $k = \omega/c$, where $\omega$ is frequency
and $c$ wave speed, means that in the frequency-domain (fixed $\omega$)
case, $k$ is proportional to the {\em refractive index}
and inversely proportional to the wave speed.
In \eqref{ourk} the inverse square of
wave speed (sometimes called {\em sloth}) is linear in the
vertical ($x_2$) coordinate, a model found in seismology
\cite[Sec.~2.5.2.2]{chapman}; in the electromagnetic case
\eqref{ourk} corresponds to linear variation in permittivity
\cite[Sec.~2.5.1]{chewbook}.
Recalling that the Helmholtz equation $(\Delta + E)u = 0$
models free-space quantum particles at energy $E$,
we call \eqref{lhelm} with \eqref{ourk} the ``gravity Helmholtz equation''
because it is a non-dimensionalized%
\footnote{We chose a unity constant in front of $x_2$ without loss of generality
since adjusting this constant is equivalent to rescaling the domain $\Omega$.}
model for quantum particles at energy $E$
in a uniform gravitational \cite{bracher} or electric \cite{gottlieb} field,
i.e.\ a linear potential.
Its one-dimensional (1D) solution is the Airy function, and its
application goes back at least to Hartree's 1931 work on the
ionosphere \cite[Sec.~6]{hartree}.
The constant $E$ sets the square of the wavenumber at the height
$x_2=0$; the waves have evanescent (modified Helmholtz) character for $x_2<-E$,
changing to
oscillatory (Helmholtz) character for $x_2>-E$.
The asymptotic behavior of solutions to \eqref{lhelm} is radically different
in the horizontal and vertical directions, with waves eventually
``dragged'' into a narrow upwards-propagating beam; see \fref{f:fs}(b).
In the optical and acoustic setting, the imaginary refractive index
for $x_2<-E$ could be relevant for graded metamaterials,
although a more common application of the effficient PDE solver
we present might be to acoustic or electromagnetic propagation in
subregions of the plane (a half-space, etc).

In the usual setting of scattering theory (see \fref{f:geom})
 an incident wave $\ui$
satisfying \eqref{lhelm} in the entire plane impinges on the obstacle;
the scattered wave is then $u$, the 
solution to the above exterior BVP with boundary data $f = -\ui$ on $\pO$.
The physical potential is then $\ui + u$.
The Dirichlet case we study corresponds to sound-soft acoustics,
or $z$-invariant Maxwell's equations with a perfect electric conductor
in transverse-magnetic polarization.
The Neumann (sound-hard) case can be solved with similar tools \cite{kress95}.
We will also solve the interior Dirichlet BVP, with applications to
graded-index optics,
and to transverse acoustic or optical
modes in a bending waveguide in 3D
approximated by an ``equivalent profile''
in which the square of refractive index varies linearly
\cite{marcuse}.

We propose boundary integral equations (BIE) as an efficient and
accurate numerical method to solve \eqref{lhelm}--\eqref{bc}.
This demands being able to compute values and first derivatives 
of $\Phi(\xx,\yy)$, the {\em fundamental solution}
to \eqref{lhelm}, where $\xx,\yy \in\R^2$ are target and source points
respectively. Recall the definition that, for
a source point $\yy\in\R^2$, $\Phi(\cdot,\yy)$ is the radiative solution to the PDE
\be
-(\Delta_\xx +  k(x_2)^2) \Phi(\xx,\yy) = \delta(\xx-\yy)~,
\label{Phidef}
\ee
where $\delta$ is the Dirac delta distribution in $\R^2$.
In contrast to the common situation, $\Phi$ is
no longer an elementary or special function of distance $|\xx-\yy|$;
this is clear in \fref{f:fs}.
A large part of our contribution is an efficient numerical method for evaluation
of $\Phi$, by applying quadrature to 
the Fourier transform of an analytical
solution to the time-dependent
Schr\"odinger equation in a linear potential \cite{hellerhouches}.
Unfortunately the integral is highly oscillatory, especially as $E$ grows,
thus we use deformation of the contour into the complex plane,
passing through the saddle (stationary phase) points
and using the trapezoid rule \cite{PTRtref} to
achieve exponential accuracy with effort independent of $E$.
The saddle points will have an elegant interpretion as the
classical ray travel times.
The cost of each evaluation of $\Phi$ is only a few hundred complex
exponential evaluations,
hence we achieve typically $10^5$ evaluations per second.

\bfi
\hspace{1in}\ig{width=4.5in}{figs/geom.eps}
\ca{Geometry for the scattering problem embedded in a stratified medium.
Wave speed decreases (refractive index increases) in the vertical $x_2$ direction.}{f:geom}
\efi

\bfi 
\ig{width=2.2in}{figs/fs_E5_srcy10.eps}
\ig{width=2.2in}{figs/fs_E5_srcy0_rays.eps}
\ig{width=2.2in}{figs/fs_E5_srcy-10.eps}
\caption{Real part of fundamental solution $\Phi(\cdot,\yy)$ plotted in
$\R^2$ for the case $E=5$ and three choices of source location $\yy$:
(a) $\yy = (0,10)$; (b) $\yy = (0,0)$; (c) $\yy = (0,-10)$.
In (b) we also show parabolic
classical ray trajectories emanating from the source $\yy$,
which themselves all lie within region A
(a parabola with focus $\yy$), discussed in Sec.~\ref{s:ray}.
Notice the color scale in (c) indicating the very small amplitude of the
propagating beam.
\label{f:fs}
}
\efi 

\subsection{Relation to previous work on frequency domain wave propagation in
layered media}

Accurate numerical propagation of high frequency waves in a
variable medium is numerically challenging:
conventional ``volume''
discretization methods such as finite differencing (FD)
\cite{leveque}
and finite elements (FEM)
require several degrees of freedom per wavelength
to achieve reasonable accuracy;
moreover, in order to avoid ``pollution errors''
the degrees of freedom per wavelength must grow with frequency
\cite{pollution}.
The resulting linear systems are so huge that iterative solvers are almost always used,
and yet preconditioning has mostly been unsuccessful for the
high-frequency Helmholtz equation,
especially for high-order discretizations,
and is a topic of current
research \cite{2011_engquist_ying_PML}.
The radiation condition must still be approximated via
artificial absorbing boundary conditions
(e.g.\ perfectly matched layers) \cite{ABC_1977} \cite[Sec.~4.7]{chewbook}.

At high frequencies, ray approximation is useful \cite{chapman}
and geometric diffraction theory can approximate the
interaction with simple obstacles.
However, such approximations break down at turning points
(such as at $x_2=-E$) and for geometric details
on the wavelength scale.
Parabolic approximation (i.e.\ one-way wave equation) methods \cite{oneway}
handle only a limited range of propagation directions,
and cannot account for back reflections.
Several of these methods are reviewed in the underwater acoustic
and elastic contexts in \cite{waveprop}.

When the medium (PDE coefficient) is piecewise constant,
reformulation as a boundary integral equation (BIE)
\cite[Ch.~3]{coltonkress} \cite{atkinson} \cite[Ch.~8]{chewbook}
is popular due to several advantages:
\bi
\item
the unknowns live on the boundary (or material interfaces)
rather than the volume; this reduction in dimension by one
greatly reduces the number of unknowns $N$,
especially at high frequencies,
and simplifies the geometric issues (meshing, etc);
\item
when a second-kind formulation is used, it remains {\em well-conditioned}
(and hence iterative methods rapidly convergent)
independent of the number of discretization nodes used;
\item
radiation conditions are already built into the representation and
need not be enforced, unlike in FD or FEM;
\item
fast algorithms such as the fast multipole method \cite{fmm2}
or fast direct solvers \cite{hackbusch,m2011_1D_survey} can reduce
the solution time to $O(N)$ for low frequencies;
\item
in the two-dimensional case, high-order quadratures on boundary curves
are easy to implement \cite{kress91,hao}.
\ei
This has enabled the scattering from objects 
(in a uniform medium) thousands of wavelengths across to be solved efficiently
to many digits of accuracy (e.g.\ see \cite{Mart07}).

In contrast, we care about scattering in a continuously-varying 
medium.
If this medium were constant outside a bounded region,
a Lippmann--Schwinger (volume integral) equation
\cite[Ch.~8]{coltonkress} could be used,
or coupling of direct discretization methods to BIE \cite{kirsch,iti}.
%
%
Tools also exist for BIE within media with a
finite number of constant layers \cite{chocai12}.
The method of the present paper extends
the above advantages of BIEs
to a particular problem where the stratified medium variation---and the
resulting wave propagation---is
smooth and {\em unbounded} in all directions.
We are not aware of previous applications of BIE to such a case.
%
The only similar work we know of is that of
Premat--Gabillet in their environmental acoustics
code Meteo-BEM \cite{meteobem},
who use BIEs with the Green's function for a linear wave speed profile.
However, they approximate the Green's function using a discrete sum over
1D eigenfunctions, an approach that works only when waves
are trapped by a ground plane; this would fail in the case of unbounded
propagation.
Also, since their BIE is of Fredholm first kind, the convergence rate
of an iterative solver would be poor.
%
\begin{rmk}  
Our approach to evaluate the Green's function is reminiscent
of the Sommerfeld integral
(spectral representation) commonly
used for layered media \cite[Ch.~2]{chewbook}, \cite{chocai12}.
Yet, although both methods exploit numerical quadrature of a contour integral,
they are distinct, with crucial differences.
In the Sommerfeld approach the integration variable is a {\em transverse
wavenumber}, and a vertical ODE has to be solved for each
contour quadrature node; for the profile \eqref{ourk} this
would demand Airy functions.
The number of quadrature nodes needed grows linearly with wavenumber,
for fixed source-target separation.
In addition, the decay of the Sommerfeld integrand is known to be very slow when
the vertical 
separation is small,
demanding various windowing approximations \cite{chocai12}.
In contrast, in
our proposed scheme the integration variable represents {\em time},
the integrand involves only exponentials, and
by choosing appropriate complex contours the number of nodes
is independent of wavenumber.
Of course, the Sommerfeld approach has the advantage over our scheme that,
assuming the ODEs can be solved fast enough,
arbitrary profiles $k(x_2)$ could be handled.
\end{rmk}  




\subsection{Outline of the paper}

We use the remainder of the introduction to state a radiation
condition that allows a unique solution to our BVP
(this is proved in Appendix~A).
In \sref{s:fs} we present an integral formula for the
fundamental solution \eqref{Phidef} for the PDE \eqref{lhelm};
here the radiation condition derives from causality in the time domain.
We then use potential theory to reformulate the BVP as
an integral equation on $\pO$ in \sref{s:bie},
and present its high-order numerical solution, which demands many
evaluations of the fundamental solution.
\sref{s:eval} is the key part of the paper in which
we present efficient new contour quadrature algorithms for this task.
In \sref{s:num} we present numerical tests of convergence and speed
for both the interior and exterior BVPs.
We draw some conclusions and discuss future work in \sref{s:conc}.

\subsection{The radiation condition for the BVP}
\label{s:rc}

Recall that for the constant-$k$ Helmholtz equation $(\Delta + k^2)u = 0$
in $\R^2$, the Sommerfeld radiation condition \cite[(3.62)]{coltonkress}
is $\partial u/\partial r - iku = o(r^{-1/2})$,
holding uniformly in angle, where $r := |\xx|$.
This corresponds to outgoing 
waves at infinity.
It guarantees a unique solution
to exterior BVPs \cite[Sec.~3.2]{coltonkress},
for instance the case of Dirichlet data \eqref{bc}.
Radiation conditions are also known for stratified media that are eventually
constant or tend to a constant in
upper and lower half-planes \cite{wilcoxstrat,jeresRC},
for variable media that tend towards a constant at large
distances \cite{miranker}, and for scattering from
unbounded rough surfaces in a uniform medium \cite{UPRC}.
For our exterior gravity Helmholtz equation there are no trapped waveguide
modes because the refractive index is monotonic in $x_2$,
simplifying the situation from that of \cite{wilcoxstrat,jeresRC}.
And yet, we have not been able to find a radiation condition
in the literature that applies in our case where
the wavenumber is unbounded in one direction.

Hence we propose the following new radiation condition,
recalling the notation $\xx=(x_1,x_2)$.
\begin{defn}[Radiation condition]  
A solution $u$ to \eqref{lhelm} in the exterior of a bounded domain
$\Omega \subset \R^2$, with medium defined by \eqref{ourk},
is called {\em radiative} if
\bea
\lim_{x_2 \to +\infty} \frac{1}{k(x_2)} \infint
\left|\frac{\partial u}{\partial x_2} - i k(x_2) u \right|^2 dx_1 &=& 0
\label{UPRC}
\\
\lim_{x_2 \to -\infty} \infint |u|^2 + \left|\frac{\partial u}{\partial x_2}\right|^2 dx_1 &=& 0
\label{DRC}
\\
\lim_{L\to\infty} \lim_{x_1 \to \pm\infty} \int_{-L}^L |u|^2 + \left|\frac{\partial u}{\partial x_1}\right|^2 dx_2 &=& 0
\label{LRRC}
\eea
\label{d:rbcs}
\end{defn}           
The first condition states that the flux is eventually
upwards-going on positive horizontal slices;
the other two guarantee enough decay that the flux tends to zero
on the sides and bottom of a large rectangular box.
Note that these conditions could most likely be tightened;
however, they are adequate for our purpose, namely
to prove in Appendix~\ref{a:rc} the following uniqueness result,
analogous to \cite[Thm.~3.7]{coltonkress} for the Helmholtz equation.
This places our BVP on a more rigorous footing.
\begin{thm}
There is either zero or one
radiative exterior solution to \eqref{lhelm}--\eqref{ourk}.
\label{t:rc}
\end{thm}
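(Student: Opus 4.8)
The plan is to follow the classical Rellich-type uniqueness argument for the Helmholtz equation \cite[Thm.~3.7]{coltonkress}, adapted to the rectangular geometry dictated by the radiation conditions and to the Airy structure forced by \eqref{ourk}. If $u_1,u_2$ are two radiative solutions of \eqref{lhelm}--\eqref{ourk} with the same Dirichlet data, I would show that their difference $u := u_1-u_2$ --- a radiative exterior solution vanishing on $\pO$ --- is identically zero. I use throughout that $u\in C^\infty(\overline{\eO})$ by elliptic regularity, and that $u(\cdot,x_2)\in L^2(\R)$ for all sufficiently large $|x_2|$ (for $x_2\to-\infty$ this is part of \eqref{DRC}; for $x_2\to+\infty$ it may be taken as part of \dref{d:rbcs}, or read off the representation of \sref{s:fs} for the scattered fields we actually build). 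Applying Green's second identity to $u$ and $\bar u$ on $R\setminus\overline{\Omega}$, where $R=[-A,A]\times[-B,B]\supset\overline\Omega$: since $k(x_2)^2=E+x_2$ is real, $\bar u\Delta u-u\Delta\bar u=0$ and the volume term drops; since $u=0$ on $\pO$, the $\pO$ part of the boundary term drops; hence $\im\int_{\partial R}\bar u\,\partial_n u\,ds=0$. On each vertical side the integrand is bounded by $\half\bigl(|u|^2+|\partial_{x_1}u|^2\bigr)$, so by \eqref{LRRC} (take $L\ge B$, then let $A\to\infty$) the two side contributions vanish in the limit; writing $\mathcal F(x_2):=\im\infint\bar u\,\partial_{x_2}u\,dx_1$ for the net upward $L^2$-flux through the slice at height $x_2$, this gives $\mathcal F(B)=\mathcal F(-B)$ for all large $B$. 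By \eqref{DRC} and Cauchy--Schwarz, $|\mathcal F(-B)|\le\half\infint\bigl(|u|^2+|\partial_{x_2}u|^2\bigr)dx_1\to0$, so $\mathcal F(B)\to0$ as $B\to+\infty$.

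Next I would extract a decay rate. From the pointwise identity $2k(x_2)\im(\bar u\,\partial_{x_2}u)=|\partial_{x_2}u|^2+k(x_2)^2|u|^2-|\partial_{x_2}u-ik(x_2)u|^2$, integrated in $x_1$, together with \eqref{UPRC} (which makes the last term negligible once divided by $k$) and $\mathcal F(B)\to0$, one concludes $k(x_2)^{-1}\infint\bigl(|\partial_{x_2}u|^2+k(x_2)^2|u|^2\bigr)dx_1\to0$ as $x_2\to+\infty$; in particular $\infint|u(x_1,x_2)|^2\,dx_1=o\bigl(k(x_2)^{-1}\bigr)=o\bigl(x_2^{-1/2}\bigr)$, i.e.\ $\|u(\cdot,x_2)\|_{L^2(\R)}=o(x_2^{-1/4})$.

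It then remains to prove a Rellich-type lemma: a solution of \eqref{lhelm}--\eqref{ourk} in a half-plane $\{x_2>c\}$ with $\|u(\cdot,x_2)\|_{L^2(\R)}=o(x_2^{-1/4})$ vanishes there. Taking the Fourier transform $\hat u(\xi,x_2)=\infint u(x_1,x_2)e^{-i\xi x_1}\,dx_1$ turns the PDE into Airy's equation in the variable $t:=\xi^2-E-x_2$, so $\hat u(\xi,x_2)=\alpha(\xi)\Ai(\xi^2-E-x_2)+\beta(\xi)\Bi(\xi^2-E-x_2)$ for measurable $\alpha,\beta$. As $x_2\to+\infty$ the argument tends to $-\infty$, where $\Ai(-z)^2+\Bi(-z)^2\sim\pi^{-1}z^{-1/2}$ with the two functions oscillating a quarter period out of phase; averaging $|\alpha\Ai+\beta\Bi|^2$ over an $x_2$-window containing many oscillations and integrating in $\xi$ over any bounded set $G$ of positive measure on which $|\alpha|^2+|\beta|^2\ge\delta$ yields, via Parseval, a lower bound of order $\delta\,|G|\,x_2^{-1/2}$ for $\infint|u(\cdot,x_2)|^2dx_1$ on that window --- contradicting the $o(x_2^{-1/2})$ bound just obtained. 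Hence $\alpha\equiv\beta\equiv0$ a.e., so $u\equiv0$ on $\{x_2>c\}$. Finally, because $\Delta+E+x_2$ is elliptic with real-analytic (indeed polynomial) coefficients, $u$ is real-analytic on the connected set $\eO$ and therefore vanishes on all of $\eO$, which proves the claim.

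The main obstacle is the Rellich-type lemma of the third paragraph: unlike in the constant-coefficient Helmholtz case there is no off-the-shelf statement to cite, and the argument must both (i) handle the transverse-wavenumber integral uniformly in $\xi$ --- the awkward point being $\xi$ near $0$, where $t(\xi,x_2)$ is stationary in $\xi$, which is precisely why the averaging is carried out in $x_2$ rather than in $\xi$ --- and (ii) exploit the \emph{exact} rate $o(x_2^{-1/4})$ rather than mere $o(1)$ decay, since a single outgoing Airy mode (such as the upward beam carried by $\Phi$ itself) already saturates $x_2^{-1/4}$ decay. By contrast, the first two paragraphs are a routine, if slightly fussy, bookkeeping of Green's identity on the growing box against the three radiation conditions; the only genuine care needed there is the order of limits ($A\to\infty$ before $B\to\infty$) and the integrability on horizontal slices flagged at the outset.
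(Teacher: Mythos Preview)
Your proposal is correct and follows essentially the same route as the paper's Appendix~A: flux conservation on a punctured rectangle together with \eqref{DRC}--\eqref{LRRC} kills all but the top-edge contribution, expansion of the square in \eqref{UPRC} then yields the Rellich-type decay $k(x_2)\infint|u|^2\,dx_1\to 0$, and the Fourier-in-$x_1$ reduction to the Airy equation plus analytic unique continuation finishes the argument. Your treatment of the Rellich step is in fact slightly more careful than the paper's (you average in $x_2$ to handle the Airy oscillations uniformly in $\xi$, whereas the paper simply asserts that nonvanishing $\alpha$ or $\beta$ on an open set would force a positive limit), but the overall architecture is the same.
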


\section{The fundamental solution and its ray interpretation}
\label{s:fs}

In this section we derive an integral formula for the fundamental
solution for our PDE \eqref{lhelm} in $\R^2$, and give some of its
properties.
In fact, since it requires no extra effort, we work in $\R^n$
and then specialize to $n=2$.
Let $\xx = (\xp,x_n)$ where $\xp=(x_1,\ldots,x_{n-1})$ is the
transverse coordinate and $x_n$ is the vertical one.
The gravity Helmholtz equation in $\R^n$ is
$(\Delta + E + x_n)u(\xx) = 0$.
Recall that the fundamental solution is defined by \eqref{Phidef}.
We will exploit causality in the time domain to
obtain a solution with physically correct radiation conditions,
so call this the ``causal'' fundamental solution
(although see \rref{r:exist}).

\begin{lem}[Bracher et al.~{\cite{bracher}}]
The 
causal fundamental solution to the gravity Helmholtz equation
$(\Delta + E + x_n)u(\xx) = 0$ 
in $\R^n$ with source point $\yy\in\R^n$ is given by
\be
\Phi(\xx,\yy) = \frac{i}{(4\pi i)^{n/2}}
\int_0^\infty \frac{1}{t^{n/2}} \exp i\left[\frac{|\xx-\yy|^2}{4t} +
\left( \frac{x_n+y_n}{2} + E\right) t - \frac{1}{12} t^3
\right]
dt ~.
\label{Phiint}
\ee
\label{l:fs}
\end{lem}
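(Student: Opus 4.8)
Rather than rederive \eqref{Phiint} from scratch, the plan is to \emph{verify} it by recognizing its integrand as the exact propagator of a time-dependent Schr\"odinger equation in a linear potential, and by noting that the range $t\in(0,\infty)$ of integration is precisely what renders the resulting $\Phi$ causal (outgoing). For $t>0$ set
\ba
S(t;\xx,\yy) &:= \frac{|\xx-\yy|^2}{4t} + \left(\frac{x_n+y_n}{2}+E\right)t - \frac{t^3}{12}~, \\
U(t;\xx,\yy) &:= \frac{1}{(4\pi i t)^{n/2}}\,e^{\,iS(t;\xx,\yy)}~,
\ea
so that the claimed formula reads $\Phi(\xx,\yy) = i\int_0^\infty U(t;\xx,\yy)\,dt$.

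The first step is a direct differentiation (no integral involved) showing that $U$ solves the evolution equation $i\,\partial_t U = -(\Delta_\xx + E + x_n)\,U$ for all $t>0$. Writing $\partial_t U = \bigl(-\tfrac{n}{2t} + i\,\partial_t S\bigr)U$ and $\Delta_\xx U = \bigl(i\,\Delta_\xx S - |\nabla_\xx S|^2\bigr)U$ and using $\Delta_\xx S = \tfrac{n}{2t}$, the amplitude contributions $\pm\tfrac{n}{2t}$ cancel — this cancellation is the transport (continuity) equation, built into the van~Vleck prefactor $t^{-n/2}$ — and the remaining condition is the eikonal identity
\be
\partial_t S + |\nabla_\xx S|^2 = E + x_n~,
\ee
which follows from $\partial_t S = -\tfrac{|\xx-\yy|^2}{4t^2} + \tfrac{x_n+y_n}{2} + E - \tfrac{t^2}{4}$ and $|\nabla_\xx S|^2 = \tfrac{|\xx-\yy|^2}{4t^2} + \tfrac{x_n-y_n}{2} + \tfrac{t^2}{4}$; note that the cubic term $-t^3/12$ in $S$ is exactly what cancels the two $\tfrac{t^2}{4}$ contributions. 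The second step is the initial condition $U(t;\cdot,\yy)\to\delta(\cdot-\yy)$ in $\mathcal{S}'(\R^n)$ as $t\to0^+$: since the last two terms of $S$ are $O(t)$, $U$ is asymptotically the free Schr\"odinger kernel $(4\pi i t)^{-n/2}e^{i|\xx-\yy|^2/4t}$, for which this limit is classical (complete the square in the Fourier transform, or rescale the Fresnel integral).

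Granting these two facts, \eqref{Phidef} follows formally:
\be
-(\Delta_\xx + E + x_n)\Phi \;=\; -i\int_0^\infty (\Delta_\xx + E + x_n)\,U\,dt \;=\; -\int_0^\infty \partial_t U\,dt \;=\; U(0^+;\cdot,\yy) - U(\infty;\cdot,\yy) \;=\; \delta - 0~,
\ee
where the middle equality uses the evolution equation. I expect this last paragraph to contain the only genuine obstacle: for $\xx\ne\yy$ the $t$-integral defining $\Phi$ is merely \emph{conditionally} convergent (the cubic phase $e^{-it^3/12}$ supplies oscillatory decay as $t\to\infty$, while near $t=0$ the factor $t^{-n/2}$ is integrable only thanks to the oscillation $e^{i|\xx-\yy|^2/4t}$), so one must justify both interchanging $\Delta_\xx + E + x_n$ with the integral and the vanishing of the boundary term at $t=\infty$. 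My approach would be the standard regularization $E\mapsto E + i\eps$, $\eps\downarrow 0$: for $\eps>0$ the extra factor $e^{-\eps t}$ makes the integral and all its $\xx$-derivatives absolutely convergent and forces $U\to 0$ as $t\to\infty$, so the displayed manipulation is rigorous and yields $-(\Delta_\xx + E + i\eps + x_n)\Phi_\eps = \delta$; after one integration by parts in $t$ near each endpoint to expose the decay, one passes $\eps\downarrow0$ by dominated convergence to recover \eqref{Phidef}. (Alternatively, away from the diagonal one differentiates under the integral to obtain a homogeneous solution, and separately identifies the strength of the singularity at $\xx=\yy$ by matching the $t\to0$ behaviour to the constant-coefficient fundamental solution with $k^2 = E+y_n$.) Finally, the choice of the \emph{forward} time interval $(0,\infty)$ — the retarded, not advanced, propagator — is precisely what makes $\Phi$ outgoing; matching it to the explicit radiation conditions of Definition~\ref{d:rbcs} is a separate steepest-descent computation on \eqref{Phiint}, tied to the saddle-point/ray analysis developed in the later sections.
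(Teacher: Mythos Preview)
Your proposal is correct, and it takes a genuinely different route from the paper. The paper \emph{derives} \eqref{Phiint} constructively: it Fourier-transforms $E\mapsto t$ to reach the time-dependent Schr\"odinger equation \eqref{tdse}, then Fourier-transforms in space, performs the gauge change $\kappa = K - t$ to reduce to a first-order ODE in $t$ at each wavevector, solves that ODE with causal initial data, and inverts both transforms to land on \eqref{TDSEfs} and hence \eqref{Phiint}. You instead \emph{verify} the formula: you check by direct computation that the integrand $U(t;\xx,\yy)$ satisfies $i\partial_t U = -(\Delta_\xx + E + x_n)U$ for $t>0$ (your eikonal-plus-transport splitting is correct, and the cancellation of the $t^2/4$ terms via the cubic phase is the key point), identify the $t\to 0^+$ limit with the delta using the free-kernel asymptotics, and then integrate in $t$ to recover \eqref{Phidef}.

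What each buys: the paper's route is constructive and explains where the nonobvious terms in $S$ come from (in particular the $-t^3/12$ arises naturally from integrating the shifted $(\kappa+t)^2$), and it makes the generalization to other exactly-solvable potentials transparent. Your route is shorter once the answer is in hand, avoids the spatial Fourier machinery entirely, and --- to your credit --- is more explicit about the analytical issues (conditional convergence, differentiation under the integral, the $E\mapsto E+i\eps$ regularization) than the paper, which treats these formally. Your closing remark that matching \eqref{Phiint} to Definition~\ref{d:rbcs} is a separate saddle-point computation is exactly right; the paper itself defers this to future work (Remark~\ref{r:exist}).
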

Its proof exploits the fact that the time-dependent {\em Schr\"odinger
equation}
has an analytically known fundamental solution in a linear potential.
We will show that the integral in
\eqref{Phiint} is in fact the Fourier transform from time $t$ to energy $E$;
note that this is distinct from the more usual connection of frequency-domain
fundamental solutions to the {\em wave equation},
for instance in the Cagniard--de Hoop method \cite[Sec.~4.2]{chewbook}.

For convenience we
simplify and rephrase the derivation of Bracher et al. \cite{bracher} in a more
mathematical language, and in dimensionless units.
Our definitions of the Fourier transform from time to energy will be,
in terms of a general function $f$,
$$
\int_{-\infty}^\infty \tilde{f}(t) e^{iEt}dt = f(E)
~,
\hspace{1in}
\frac{1}{2\pi}\int_{-\infty}^\infty f(E) e^{-iEt}dE = \tilde{f}(t)
~.
$$
Similarly, our definition for spatial Fourier transforms is
$$
\int_{\R^n} \hat{f}(\kk) e^{i \kk \cdot\xx} d\kk = f(\xx)
~,
\hspace{1in}
\frac{1}{(2\pi)^n}\int_{\R^n} f(\xx) e^{-i \kk \cdot\xx} d\xx = \hat{f}(\kk)
~.
$$
We now prove the lemma.
\begin{proof}
We will isolate the last coordinate with the
notation $\xx=(\xp,x_n)$ and $\yy = (\yp,y_n)$.
Suppressing for now the $\yy$ dependence, but making the dependence on $E$
explicit, the fundamental solution obeys
$$
(\Delta + E + x_n)\Phi(\xp,x_n;E) = -\delta(\xx-\yy)
~.
$$
The Fourier transform from $E$ (energy) to $t$ (time) turns this into
\be
(\Delta + i\partial_t + x_n)\tilde{\Phi}(\xp,x_n;t) = -\delta(\xx-\yy)\delta(t)
\label{tdse}
\ee
which is the fundamental solution for the
time-dependent Schr\"odinger equation in a linear potential.
We may solve this exactly by performing a Fourier transform in space,
from coordinates $(\xp,x_n)$ to wavevector $(\kp,K)$,
$$
\left[-|\kp|^2 - K^2 + i(\partial_t + \partial_K) \right] \hat{\tilde{\Phi}}
(\kp,K;t) = -(2\pi)^{-n} e^{-i\kk\cdot\yy} \delta(t)
~.
$$
The only derivatives are an advection term causing constant unit
speed drift in wavevector in the positive $x_n$ direction,
so we shift to a frame moving in wavevector, substituting
$\kappa = K-t$
(in physics this is called a gauge change \cite[App.~A]{bracher}).
To change from coordinates $(\kp,K;t)$ to $(\kp,\kappa;t)$ we then need
$$ \left.\frac{\partial}{\partial t}\right|_\kappa = 
\left.\frac{\partial}{\partial t}\right|_K +
\frac{\partial}{\partial K}
~.
$$
This gives the simple first-order ODE in time at each wavevector
$(\kp,\kappa)\in\R^n$,
$$
\left[-|\kp|^2 - (\kappa+t)^2 + i\partial_t\right] \hat{\tilde{\Phi}}
(\kp,\kappa;t) = -(2\pi)^{-n} e^{-i(\kp\cdot\yp +\kappa y_n)}\delta(t)
~.
$$
For each $(\kp,\kappa)\in\R^n$ we
seek a causal solution with $\hat{\tilde{\Phi}}(\kp,\kappa;t)=0$
for all $t<0$. The right-hand side is an impulsive excitation at $t=0$
which gives the ODE solution
$$
\hat{\tilde{\Phi}}(\kp,\kappa;t) = \frac{i e^{-i(\kp\cdot\yp +\kappa y_n)}}{(2\pi)^n}
\exp i\left[-|\kp|^2 t - \kappa^2 t - \kappa t^2 - \frac{1}{3}t^3  \right]
~,\qquad t>0 ~.
$$
Changing back to the original wavevector coordinates via $\kappa=K-t$ gives
$$
\hat{\tilde{\Phi}}(\kp,K;t) = \frac{ie^{i(y_nt- \frac{1}{3}t^3)}}{(2\pi)^n}
e^{-i \kk\cdot\yy}
\exp i\left[-|\kp|^2 t - K^2 t + K t^2 \right]
~,\qquad t>0 ~.
$$
The final exponential is an (imaginary) gaussian in Fourier space,
whose inverse spatial Fourier transform is known exactly.
The middle exponential term causes a real space translation by $\yy$.
This gives after simplification,
\be
\tilde{\Phi}(\xx,\yy;t) = \frac{i}{(4\pi i t)^{n/2}}
\exp i\left[\frac{|\xx-\yy|^2}{4t} + \frac{x_n+y_n}{2} t - \frac{1}{12}t^3
\right]
~,\qquad t>0 ~.
\label{TDSEfs}
\ee
This is the
fundamental solution to the time-dependent Schr\"odinger equation \eqref{tdse}.
An inverse Fourier transform in time returns to the frequency-domain,
giving the desired \eqref{Phiint}.
\end{proof}

\begin{rmk}[plain Helmholtz equation]
\label{r:nograv}
Applying the above technique to the constant-wavenumber Helmholtz equation
$(\Delta + E)u = 0$ gives the fundamental solution representation
$$
\Phi(\xx,\yy) = \frac{i}{(4\pi i)^{n/2}}
\int_0^\infty \frac{1}{t^{n/2}} \exp i\left[\frac{|\xx-\yy|^2}{4t} + E t\right]
dt ~,
$$
which is the same as \eqref{Phiint} absent two terms.
In the case $n=2$, by changing variable
to $s = (2i\sqrt{E}/r)t$, where $r = |\xx-\yy|$, we see that the above is the
little-known
Schl\"afli integral representation \cite[(4) Sec.~6.21]{watson} for the
radiative fundamental solution $(i/4) H^{(1)}_0(\sqrt{E}r)$,
where $H_0^{(1)}$ is the outgoing Hankel function of order zero.
\end{rmk}

\begin{rmk}
\label{r:exist}
We leave for future work a proof that
the causal fundamental solution \eqref{Phiint}
satisifies our radiation condition in Definition~\ref{d:rbcs}, although
physical intuition, the Helmholtz case, and
numerical evidence strongly suggest that this is the case.
A proof seems to demand stationary phase estimates beyond the
scope of this work. 
A rigorous existence proof for the BVP \eqref{lhelm}--\eqref{ourk}
would follow,
in an analogous fashion to \cite[Thm.~3.9]{coltonkress}.
\end{rmk}

The importance of \eqref{Phiint} is that
quadrature of this integral will provide us with an 
accurate numerical algorithm to evaluate the fundamental solution for $n=2$
(Section \ref{s:eval}).

Finally we recall a property of $\Phi$ special to $n=2$.
Since the PDE has coefficients which vary as analytic functions of
$x_1$ and $x_2$, the fundamental solution must have the form
\cite[Ch.~5]{garab}
\be
\Phi(\xx,\yy) = A(\xx,\yy) \frac{1}{2\pi} \log\frac{1}{|\xx-\yy|} + B(\xx,\yy)
~,
\label{spike}
\ee
where $A$ and $B$ are analytic in both coordinates of both variables,
and $A(\xx,\xx) = 1$
for all $\xx\in\R^2$. Thus, as with the Laplace and Helmholtz equations,
there is a (positive sign) logarithmic singularity at the source point.

\subsection{Connection to ray dynamics, propagating and forbidden regions}
\label{s:ray}

In Fig.~\ref{f:fs} we plot the fundamental solution, showing the
different behaviors resulting by varying the height of the
source location $\yy$ at fixed energy $E$.
In panel (a) the radiation from the source point is visible, as is
interference between upwards and downwards
propagating waves.
In panel (b) the source is closer to the turning height $x_2=-E$,
and ray trajectories have been superimposed showing the
connection to classical dynamics.
We now review this connection
(see e.g.\ \cite[Sec.~9-10]{kellerrev},
\cite[Sec.~4.5]{evanspde},
\cite[Sec.~5.1]{chapman} \cite{tannorbook}).
Consider the general variable-coefficient Helmholtz equation
$$(\Delta + k(\xx)^2) u = 0
~.
$$
When $k$ is locally large, inserting Keller's traveling wave
ansatz $u(\xx) = a(\xx) e^{i \phi(\xx)}$ into the PDE
gives to leading order the eikonal equation $|\nabla \phi| = k(\xx)$,
whose characteristics are rays given by evolving Hamilton's
equations (here a dot 
indicates a time derivative),
\be
\dot{\xx} = \nabla_\pp H~,
\qquad \dot{\pp} = -\nabla_\xx H~,
\label{hamil}
\ee
with the Hamiltonian $H(\xx,\pp) = |\pp|^2 + V(\xx)$
and potential $V(\xx) = -k(\xx)^2+E$, with (conserved) total energy $H=E$,
where $E$ is any constant.
(Here the kinetic energy term corresponds to a particle of mass $\half$.)
Another way to express this is via quantization,
or ``quantum-classical correspondence'',
which associates the operator $i\nabla$ with the momentum variable $\pp$.
This rigorous connection is the topic of semiclassical analysis \cite{zworski}.

Returning to our case of stratified $k$, and the constant $E$,
given by \eqref{ourk}, then $V(\xx) = -x_2$,
we see that rays evolve under a constant ``gravitational''
force field $-\nabla V(\xx) = (0,1)$
in the vertical direction,
i.e.\ Hamilton's equations are $\dot{\xx} = 2\pp$ and
$\dot{\pp} = (0,1)$. To model the fundamental solution $\Phi(\cdot,\yy)$, rays
are launched from the source $\yy$, with initial momentum ${\bm\rho}=(\rho_1,\rho_2)$,
hence have the Galilean solution
\be
x_1(t) = y_1 + 2\rho_1t~, \qquad
x_2(t) = y_2 + 2\rho_2t + t^2~.
\label{galileo}
\ee
Fig.~\ref{f:fs} suggests that such rays
predict the wavefronts and caustics of $\Phi$, and that $\Phi$ is
small in the ``classically forbidden'' region, which we call region F,
defined in the following.
\begin{prop} 
Rays obeying \eqref{hamil} with
Hamiltonian $H(\xx,\pp) = |\pp|^2 - x_2$
launched from $\yy$ with total energy $E$ cannot reach the forbidden
region F, which is defined by $\xx=(x_1,x_2)$ such that
\be
\frac{|\xx-\yy|}{2} \; > \; \frac{x_2 + y_2}{2} + E~,
\label{forbid}
\ee
whose boundary is the parabola
with focus $\yy$ and directrix $x_2 = -y_2-2E$.
Rays can reach any point in the complement of the region,
which we will label region A, for ``classically allowed''.
\label{p:rays}
\end{prop}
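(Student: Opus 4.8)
The plan is to parametrize the rays explicitly using the Galilean solution \eqref{galileo}, substitute into the energy constraint $H=E$, and then determine which target points $\xx$ are reachable by \emph{some} choice of launch momentum ${\bm\rho}$. First I would write down the conserved energy along a ray: since $H(\xx,\pp) = |\pp|^2 - x_2$ and $\dot\pp = (0,1)$, we have $\pp(t) = (\rho_1, \rho_2 + t)$, so $H = \rho_1^2 + (\rho_2+t)^2 - x_2(t)$. Using $x_2(t) = y_2 + 2\rho_2 t + t^2$ from \eqref{galileo}, the $t^2$ and $2\rho_2 t$ terms cancel against $(\rho_2+t)^2 - x_2(t)$, leaving $H = \rho_1^2 + \rho_2^2 - y_2$, which indeed equals $E$ as a constraint purely on the initial data: $\rho_1^2 + \rho_2^2 = E + y_2$. (This also exhibits the evanescent regime: no real launch momentum exists when $E + y_2 < 0$, consistent with the source being below the turning height.)

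Next I would ask: given a target $\xx = (x_1,x_2)$, when does there exist $t>0$ and ${\bm\rho}$ with $\rho_1^2+\rho_2^2 = E+y_2$ satisfying \eqref{galileo}? From \eqref{galileo}, $\rho_1 = (x_1-y_1)/(2t)$ and $\rho_2 = (x_2 - y_2 - t^2)/(2t)$. Substituting into the energy constraint gives a single equation in $t$:
\be
(x_1-y_1)^2 + (x_2 - y_2 - t^2)^2 = 4t^2(E+y_2)~.
\ee
The key step is to recognize this as a quadratic in the variable $s := t^2 > 0$. Expanding,
\be
s^2 - 2\bigl(x_2 - y_2 + 2(E+y_2)\bigr) s + (x_1-y_1)^2 + (x_2-y_2)^2 = 0~,
\ee
i.e.\ $s^2 - 2\bigl(x_2 + y_2 + 2E\bigr)s + |\xx-\yy|^2 = 0$ after using $-y_2 + 2(E+y_2) = y_2 + 2E$. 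A point $\xx$ is reachable precisely when this quadratic has a positive real root $s$. By Vieta, the product of roots is $|\xx-\yy|^2 \ge 0$ and the sum is $2(x_2+y_2+2E)$; a positive root exists iff the discriminant is nonnegative \emph{and} the sum of roots is positive (when the product is nonnegative, both roots have the same sign, so they are positive iff their sum is positive — and if the product is zero one root is $0$, the boundary case). The discriminant condition is $(x_2+y_2+2E)^2 \ge |\xx-\yy|^2$, which together with $x_2+y_2+2E \ge 0$ is exactly the negation of \eqref{forbid}: the allowed region A is $\tfrac{|\xx-\yy|}{2} \le \tfrac{x_2+y_2}{2} + E$. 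Finally I would identify the boundary curve $|\xx-\yy| = x_2 + y_2 + 2E$ as a parabola: it states that the distance from $\xx$ to the point $\yy$ equals the (signed) distance from $\xx$ to the horizontal line $x_2 = -y_2 - 2E$, which is the focus–directrix definition of the parabola with focus $\yy$ and that directrix.

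The main obstacle — really the only subtle point — is the sign bookkeeping in the reachability criterion: one must be careful that a nonnegative discriminant alone does not guarantee a \emph{positive} root $s=t^2$, and to handle the degenerate cases ($|\xx-\yy|=0$, i.e.\ $\xx=\yy$, and the boundary parabola where one root vanishes) cleanly, as well as to confirm that $s>0$ yields a genuine ray with $t = \sqrt{s} > 0$ and finite launch momentum. Everything else is the routine algebra of completing the square and matching to the conic-section definition; no estimates or limiting arguments are needed, since the statement is purely about the classical Hamiltonian flow.
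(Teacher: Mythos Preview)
Your proposal is correct and follows essentially the same route as the paper: substitute the explicit Galilean trajectories \eqref{galileo} into the energy constraint $\rho_1^2+\rho_2^2=E+y_2$, obtain a quadratic in $t^2$ (the paper writes it as $t^4/4 - bt^2 + a = 0$ with $a=|\xx-\yy|^2/4$ and $b=(x_2+y_2)/2+E$, which is your equation after dividing by $4$), and then read off the reachability condition $\sqrt{a}\le b$ and the focus--directrix description of its boundary. Your Vieta-based sign bookkeeping is a slightly more explicit justification of the step the paper states tersely (``No real solutions are possible precisely when $\sqrt{a}>b$''), but the argument is otherwise the same.
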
  
We provide a proof, simplifying that of
Bracher et al.~\cite{bracher},
that introduces the concept of {\em travel time},
crucial to the later numerical evaluation.
\begin{proof}
We substitute the formulae for $\rho_1$ and $\rho_2$ from
\eqref{galileo} into the expression $\rho_1^2+\rho_2^2 = E + y_2$
expressing that the  initial total energy $H(\yy,\bm\rho)=E$,
to get the quadratic equation in $t^2$,
\be
\frac{t^4}{4} - b t^2 + a = 0
\label{tquad}
\ee
where for later simplicity we define
\be a := \frac{|\xx-\yy|^2}{4}, \qquad
b := \frac{x_2+y_2}{2}+E
~.
\label{ab}
\ee
The positive solutions to \eqref{tquad}
give possible ray travel times from $\yy$ to $\xx$ at
fixed $E$, being
\be
t_{\pm} = +\sqrt{2 \bigl( b \pm \sqrt{b^2-a} \bigr) }
~.
\label{tpm}
\ee
No real solutions are possible precisely when
$\sqrt{a}>b$, which gives \eqref{forbid}.
The boundary, written $2\sqrt{a} = 2b$, states
that the distance from $\yy$ to $\xx$ equals the distance from $\xx$
to the directrix line $x_2 = -y_2-2E$, defining a parabola.
\end{proof}
At the parabolic boundary the two travel times coalesce, i.e.\ $t_- = t_+$,
causing a caustic (singularity in density) for the rays,
which manifests itself as large amplitudes in the fundamental solution;
see Fig.~\ref{f:fs}(a)--(b).
We show in Fig.~\ref{f:fs}(c)
a case where the source itself lies in the forbidden region.
Here there are no classical rays and
the wave leakage into the propagating region is exponentially small,
occurring only in a single upwards direction.

Finally we emphasize that time evolution appears in
two different settings in this section:
in the time-dependent Schr\"odinger equation
to give $t$ in the integral \eqref{Phiint}, and
the time variable $t$ in the classical dynamics.
We have chosen the dimensionless units (i.e.\ particle mass $\half$)
so that they correspond.

\section{Conversion to a boundary integral equation, and its numerical solution}
\label{s:bie}

We will reformulate the exterior Dirichlet BVP \eqref{lhelm}--\eqref{ourk}
as a Fredholm second-kind integral equation on $\pO$.
Since it provides us a useful numerical test case, we also do the same
for the interior BVP.
Recall that by standard elliptic PDE theory,
given a compact domain $\Omega$, the interior
Dirichlet BVP has a unique solution for
all $E$ except at a countable set (the Dirichlet eigenvalues of the operator
$-\Delta -x_2$) that accumulates only at infinity
\cite[Thms.~4.10, 4.12]{mclean}.

Given the fundamental solution $\Phi({\bf x}, {\bf y})$, and
a ``density'' function $\tau$ on the boundary curve $\pO$,
we define the standard single- and double-layer potential representations,
\begin{equation}
(\Sing \tau)({\bf x}):= \int_{\partial \Omega} \Phi({\bf x},{\bf y}) \tau({\bf y}) \, ds_\yy
\qquad
(\Doub \tau)({\bf x}):= \int_{\partial \Omega} \frac{\partial \Phi({\bf x},{\bf y})}{\partial {\bf n}_\yy} \tau({\bf y}) \, ds_\yy
~,
\label{reps}
\end{equation}
where $\nn(\yy)$ is the outward-pointing unit normal vector at the point
$\yy\in\pO$, and $ds$ the usual arc length element.
One may interpret $\yy$ as a source point and $\xx$ as a target.
Since limits of such potentials on the curve itself may depend on from
which side it is approached, we define
$$
v^\pm({\bf x}) := \lim_{h\to 0^+} v({\bf x} \pm h {\bf n}({\bf x}))
~.
$$
Letting $S:C(\pO)\to C(\pO)$ be the boundary integral operator with
kernel $\Phi(\xx,\yy)$, and $D:C(\pO)\to C(\pO)$ be the
boundary integral operator with kernel
$\partial\Phi(\xx,\yy)/\partial \nn(\yy)$ taken in the principal value sense,
we have jump relations,
\bea
(\Doub \tau)^\pm (\xx) & = &(D\tau \pm \half\tau)(\xx)
~,
\label{jr1}
\\
(\Sing \tau)^\pm (\xx) & = & (S\tau)(\xx)
~,
\label{jr2}
\eea
which are identical to the Laplace and Helmholtz cases
\cite[Thm.~3.1 and p.66]{coltonkress}.
For the proof we need the variable-coefficient elliptic PDE
case \cite[Thm.~6.11 and (7.5)]{mclean}.

The indirect BIE is constructed by making the ``combined field integral
equation'' (CFIE) ansatz
\be
u = (\Doub - i\eta\Sing)\tau
\label{urep}
\ee
and substituting this into the boundary condition \eqref{bc},
using the exterior jump relations to get the BIE for the unknown density $\tau$,
\be
(\half I + D - i\eta S)\tau = f \hspace{2in} \mbox{exterior BIE}
~,
\label{bie}
\ee
where $I$ is the identity.
This mixture of double- and single-layer prevents
a spurious resonance problem
(for $\eta=0$ the operator would be singular at interior Neumann eigenvalues),
making the BIE a robust method for the BVP.
The choice of constant $\eta$ is not crucial but is commonly
scaled with the wavenumber \cite{kress91}; our wavenumber
varies in space, and we choose at typical value $\eta = \sqrt{E}$.
Note that the correct sign of $\eta$ is crucial for
rapid convergence of iterative solvers at high frequency.

For the interior BVP, the CFIE is not (usually) needed, so we
set $\eta=0$ and get
\be
(-\half I + D)\tau = f \hspace{2in} \mbox{interior BIE}
~.
\label{bieint}
\ee

Note that the operator $S$ is compact, and
when $\pO$ is smooth the operator $D$ is compact, making the above
BIEs of Fredholm second kind.
This has the well-known advantages over first-kind BIEs
of stability under discretization, and
a benign spectrum leading to rapid convergence for the iterative solution of
the resulting linear system.

\subsection{Numerical solution: Nystr\"om method and quadrature}
\label{s:nyst}

We first parametrize the smooth closed curve $\pO$ by a
$2\pi$-periodic function $\zz:[0,2\pi)\to\R^2$ such that $\zz(t) \in \pO$
and $|\zz'(t)| \neq 0$, for all $t \in \R$.
Changing variable to the parameter $t$ turns \eqref{bie} into
a integral equation on the periodic interval $[0,2\pi)$,
\be
\half \tau(t) + \int_0^{2\pi} \left(
\frac{\partial \Phi(\zz(t),\zz(s))}{\partial \nn_{\zz(s)}}
- i\eta \Phi(\zz(t),\zz(s))
\right)|\zz'(s)|\,\tau(s) ds \;=\; f(t), \quad \forall t \in [0,2\pi)
\ee
The reparametrization of \eqref{bieint} is similar.
We can write both of these integral equations in the standard form
\be
\tau(t) + \int_0^{2\pi} K(t,s)\tau(s)ds  = g(t),  \quad \forall t \in [0,2\pi)
\label{bieK}
\ee
In the exterior case, we see from the presence of $\Phi$ and from
\eqref{spike} that $K$ has a
logarithmically singular kernel, i.e.\ $K(s,t) \sim \log|s-t|$;
in the interior case the kernel of
$K$ is continuous at the diagonal but has a weaker singularity
of the form $|s-t|^2 \log |s-t|$, as with the Helmholtz
equation \cite[Sec.~3.5]{coltonkress}.
To achieve high-order convergence in either case
when the data $g$ is smooth we will need to
use a quadrature scheme accurate for kernels containing a periodized
log singularity of the form
\be
K(t,s) = K_1(t,s) \log \left(4 \sin^2 \frac{s-t}{2} \right) + K_2(t,s)
\label{split}
\ee
where $K_1$ and $K_2$ are smooth and $2\pi$-periodic in both of their
arguments.

We apply the Nystr\"om method \cite[Sec.~12.3]{na}
to approximate the solution of \eqref{bieK}
by that of a linear system, based upon an underlying quadrature rule.
For this we use periodic trapezoid rule quadrature,
\be
\int_0^{2\pi} \phi(t) dt \approx \frac{2\pi}{N} \sum_{j=1}^N \phi(s_j),
\qquad \mbox{where } s_j = 2\pi j/N
\label{ptr}
\ee
whose approximation error for a $2\pi$-periodic $\phi\in C^\infty(\R)$
is super-algebraic, i.e.\ $O(N^{-m})$ for each $m>0$ \cite[Cor.~9.27]{na}.
The first step in the Nystr\"om method is to enforce
\eqref{bieK} only at the nodes $\{s_i\}$, giving
\be
\tau(s_i) + \int_0^{2\pi} K(s_i,s)\tau(s)ds  = g(s_i),  \quad \forall i = 1,\ldots, N
\label{seminyst}
\ee
Were $K$ to possess a smooth kernel (i.e.\ $K_1\equiv 0$),
superalgebraic convergence
would be achieved by applying \eqref{ptr} to the above integral,
to give the square $N$-by-$N$ linear system,
\be
\tau_i + \sum_{j=1}^N A_{ij} \tau_j = g_i, \quad \forall i = 1,\ldots, N
\label{linsys}
\ee
with elements of the matrix given by
\be
A_{ij} = \frac{2\pi}{N} K(s_i, s_j)
~,
\label{Asmooth}
\ee
and where $\tau_j$ approximates $\tau(s_j)$ and the right-hand side
vector has elements $g_j=g(s_j)$.

However, for general singular kernels of the form \eqref{split}, the formula
\eqref{Asmooth} fails to be accurate, and diagonal entries would be infinite.
Yet it is still possible to design a set of quadrature nodes to approximate
the integral in
\eqref{seminyst} to high accuracy for kernels of the form \eqref{split}.
This is done by replacing a few of the trapezoid nodes $s_j$
near the singularity $s_i$ by
a new set of auxiliary nodes and weights;
we choose 16th-order Alpert end-correction
nodes \cite{alpert}, of which 30 are required (15 either side of the singularity).
The auxiliary node nearest the target point is at a distance
of around $10^{-3} \delta$ from this target point, where
$\delta \approx (2\pi/N)|\zz'(s_i)|$ is the local underlying node spacing.
The values of $\tau$ at these auxiliary nodes is related to the
neighboring few elements
of the vector $\{\tau_j\}_{j=1}^N$ using local Lagrange interpolation.
The net effect is that the matrix $A$ takes the form \eqref{Asmooth}
away from the diagonal, but with corrected entries near the diagonal.
The full formulae are presented in \cite[Sec.~4]{hao}.
This gives for kernels of the form \eqref{split} a high-order convergence
of the error between $\tau_j$ and the true solution samples $\tau(s_j)$
of $O(N^{-16} \log N)$,
for either the exterior or interior BIEs of interest.
For the convergence theory see
\cite[Cor.~3.8]{alpert} for the end-correction scheme,
and Kress \cite[Ch.~12]{LIE}.

Once the linear system \eqref{linsys} has
been solved, the vector ${\bf \tau} = \{\tau_j\}_{j=1}^N$
may be used to reconstruct the scattered potential at any target location
sufficiently far from $\pO$, by substituting the same trapezoid rule into
the integrals \eqref{reps} in
the representation \eqref{urep}, to get
\be
u(\xx) \; = \; \sum_{j=1}^N \left(
\frac{\partial \Phi(\xx,\zz(s_j))}{\partial \nn_{\zz(s_j)}}
- i\eta \Phi(\xx,\zz(s_j))
\right)|\zz'(s_j)|\,\tau_j
\label{ueval}
\ee
A rule of thumb is that this quadrature rule is accurate for all
points at least $5\delta$ from the
boundary
\cite[Remark~6]{ce}.
As before, for the interior case we set $\eta=0$.

\bfi 
\centering\ig{width=5in}{figs/integA.eps}
\caption{
Contour integration for the fundamental solution at $E=20$, $\yy=\mbf{0}$.
(a) $\re \Phi(\xx,\yy)$ in the physical domain of $\xx$,
showing the two ray paths to reach $\xx = (20,10)$,
and the classically allowed (A), forbidden (F), and deep forbidden (D) regions.
(b) integrand of \eqref{Phis} on the real $s$ axis,
with the two stationary phase points $s_\pm = \log t_\pm$.
(c) real part of the same integrand in the complex $s$ plane,
saddle points (white dots), and
the 79 quadrature nodes used lying on the contour $\gamma$.
In (a) and (c) the color scale is
blue (negative) through green (zero) to red (positive); in (c) the
color range covers $[-1,1]$.
\label{f:integA}}
\efi

\section{Evaluation of the fundamental solution}
\label{s:eval}

Filling the Nystr\"om matrix $A$ of the previous section,
and evaluating the solution $u$ via \eqref{ueval},
both demand a large number of evaluations of $\Phi(\xx,\yy)$,
from source points $\yy$ that are either periodic trapezoid nodes $\zz(s_j)$ or
auxiliary nodes. When filling $A$ the target points $\xx$ are also
the nodes $\zz(s_i)$, thus for a small number of
cases ($O(N)$ of them), the distance $|\xx-\yy|$ will be very small
(e.g.\ $10^{-3} \delta$).

As promised, we base our evaluation of the fundamental solution
on the $n=2$ dimensional case of \eqref{Phiint},
\be
\Phi(\xx,\yy) \;=\;
\frac{1}{4\pi}
\int_{0}^\infty \frac{1}{t} \exp i\left[\frac{|\xx-\yy|^2}{4t} +
\left( \frac{x_n+y_n}{2} + E\right) t - \frac{1}{12} t^3
\right]
ds
\;=\;
\frac{1}{4\pi}
\int_{0}^\infty \frac{1}{t} e^{i \psi(t)} dt
\ee
where, recalling \eqref{ab},
the {\em phase function} $\psi(t) = \psi_{a,b}(t)$ is defined by
\be
\psi(t) := \frac{a}{t} + bt - \frac{1}{12} t^3
~.
\label{psi}
\ee
To remove the pole at the origin, and place small and large $t$
on an equal footing, we change variable via $t = e^s$ to get
\be
\Phi(\xx,\yy) \;=\;
\frac{1}{4\pi} \int_{-\infty}^\infty \exp i \psi(e^s) \, ds
~.
\label{Phis}
\ee
This integrand is shown in \fref{f:integA}(b), for $E=20$ and the
source $\yy$ and target $\xx$ shown in \fref{f:integA}(a).
It is clearly highly oscillatory---and it becomes more so with increasing
$E$---thus accurate integration along
the real $s$ axis would be prohibitively expensive.
However, $\phi(e^s)$,
and hence the integrand, is analytic in the entire
complex $s$ plane.
We thus use numerical saddle point integration
\cite[Sec.~5.5]{temme} \cite{borncont}
(related to, but simpler than, ``numerical steepest descent'' \cite{huy}),
along a contour passing through the stationary phase (saddle) points
and asymptotically tending to the correct regions of the plane.
We have the following by direct differentiation of \eqref{psi}.
\begin{prop} 
Given a source $\yy$, target $\xx$, and energy $E$, 
the stationary phase points, that is, the solutions to $\psi'(t) = 0$,
are precisely the classical ray travel times $t_\pm$ already given by
\eqref{ab}-\eqref{tpm}.
\label{p:saddle}
\end{prop}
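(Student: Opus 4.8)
The plan is to simply differentiate the phase function and recognize the resulting equation. First I would compute, from \eqref{psi},
\[
\psi'(t) \;=\; -\frac{a}{t^2} + b - \frac{1}{4}t^2~,
\]
which is valid for all $t\neq 0$, and in particular on the positive real axis where the travel times live.

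Next I would set $\psi'(t)=0$ and clear the denominator by multiplying through by $-t^2$ (permissible since $t\neq 0$), obtaining
\[
\frac{t^4}{4} - b t^2 + a \;=\; 0~.
\]
This is \emph{identical} to the quadratic-in-$t^2$ equation \eqref{tquad} that was derived in the proof of Proposition~\ref{p:rays} by substituting the Galilean ray solution \eqref{galileo} into the energy constraint $H(\yy,\bm\rho)=E$. Hence the solutions coincide: the stationary points of $\psi$ are exactly the quantities $t_\pm = +\sqrt{2(b\pm\sqrt{b^2-a})}$ given in \eqref{tpm}, and the real positive stationary points exist precisely when $\xx$ lies in the allowed region A, i.e.\ when $\sqrt{a}\le b$.

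There is essentially no obstacle here; the content of the proposition is the coincidence of two algebraic conditions that were written down separately, so the ``proof'' is the one-line observation that $\psi'(t)=0$ rearranges to \eqref{tquad}. If anything, the only point worth a remark is the bookkeeping: confirming that the factor of $-\tfrac14$ from differentiating $-\tfrac1{12}t^3$ and the sign from the $a/t$ term combine to reproduce \eqref{tquad} with the correct coefficients, and noting that in the forbidden region F the two stationary points become a complex-conjugate pair, which is exactly the behavior exploited later when deforming the contour $\gamma$ in \fref{f:integA}(c).
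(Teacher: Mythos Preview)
Your proposal is correct and matches the paper's approach exactly: the paper states only that the proposition follows ``by direct differentiation of \eqref{psi},'' and you have supplied precisely that computation, showing $\psi'(t)=0$ rearranges to the quartic \eqref{tquad}.
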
    
This connection between waves and rays is key to our
efficient numerical evaluation of the integral \eqref{Phis}.

\begin{rmk} 
There is a beautiful and
deep physical reason lying behind Prop.~\ref{p:saddle}, i.e.\ $\psi'(t_\pm)=0$.
The phase function (term in square brackets)
in the time-dependent Schr\"odinger
propagator \eqref{TDSEfs} is the classical {\em action} $S(\xx,\yy;t)$,
defined as the time integral over $[0,t]$
of the Lagrangian along the unique classical
path from $\yy$ to $\xx$ taking precisely time $t$ \cite[Sec.~2]{bracher}
\cite[Ch.~10]{tannorbook}.
(Note that for a general potential function $V(\xx)$, this is only approximately
true in the semi-classical or high-frequency limit; its exactness
here reflects exact formulae for the propagation of the Gaussian
when the potential is at most quadratic in the coordinates
\cite{hellerhouches}.)

Inserting this into the last step in the proof of Lemma~\ref{l:fs},
we see that $\Phi(\xx,\yy) = \int_0^\infty \exp i \left[S(\xx,\yy;t)
+ Et\right] \, dt$,
thus the phase function \eqref{psi} is $\psi(t) = S(\xx,\yy;t) + Et$.
A less well-known result from classical mechanics is
$\partial S(\xx,\yy;t)/\partial t |_{\xx,\yy} =
-E_{\xx,\yy}(t)$,
where
$E_{\xx,\yy}(t)$ is the energy
required to complete the path in time $t$.
\cite[Ex.~10.4(c)]{tannorbook}.
Thus $\psi'(t)=0$ precisely when $E_{\xx,\yy}(t)=E$, that is, at the travel times
for a ray at the particular energy $E$ to pass from $\yy$ to $\xx$.
\label{r:phase}
\end{rmk} 

An example contour passing through the (real-valued) saddle points
and ending in the correct regions of the plane is shown in
\fref{f:integA}(c).
On such a contour the integral may be approximated to exponential
accuracy using the trapezoid rule \cite{PTRtref}
(with respect to the variable parametrizing the contour),
and the sum may be truncated once values are sufficiently small.

\bfi 
\ig{width=3.2in}{figs/integAnear.eps}
\qquad
\ig{width=1.7in}{figs/integAcoal1.eps}
\\
\ig{width=3.2in}{figs/integF.eps}
\qquad
\ig{width=1.7in}{figs/integAcoal2.eps}
\\
\ig{width=3.2in}{figs/integD.eps}
\ig{width=3.2in}{figs/integDnear.eps}
\caption{
Real part of integrands plotted in the complex $s$ plane, for source
$\yy=\mbf{0}$, with saddle points (white dots) and numerical
integration contours (grey) and nodes (black).
(a) Region A (allowed) but source close to target, $E=20$, $\xx=(0.1,0.2)$.
(c) Region F (forbidden), $E=10$, $\xx=(1,-11)$.
(b) and (d) Zoom in on coalescing saddle points:
at $E=10^3$, with $\xx=(2E-1,0)$ in (b) (just allowed),
and $\xx=(2E+0.2,0)$ in (d) (just forbidden).
(e) Region D (deep forbidden), $E=1$, $\xx=(1,-5)$.
(f) Region D but source close to target, $E=-10$, $\xx=(0.1,0.2)$.
\label{f:integmulti}
}
\efi

\subsection{Choice of saddle point contour}

Since the integrand in \eqref{Phis} is entire, mathematically
the choice of contour is irrelevant
as long as its ends connect $-\infty$ to $+\infty$.
However, for practical numerical evaluation the contour choice is crucial.
Observe in \fref{f:integA}(c) that the integrand is exponentially small
in some regions, exponentially large in others, and that
the borders between them are quite well defined.
One may deform the limits of the contour to lie below the real axis,
as long as one stays within the exponentially small regions
adjoining the real axis (lower-left and lower-right in \fref{f:integA}(c)).
It must connect these limits, but
to prevent catastrophic cancellation it must avoid large regions,
passing between small regions only via saddle points,
and passing through these saddle points at an angle not too
far from the steepest descent direction.
In addition, an analytic contour shape is desirable, since the trapezoid
rule is then exponentially convergent.
See \fref{f:integA}(c) and \fref{f:integmulti} for examples.

The task remains to choose, for any parameters $a$ and $b$,
a good contour, and rules for choosing the trapezoid node spacing
and truncation intervals. Our rules will depend on the existence
and types of classical rays.
Recall the definition that the set $\xx$, $\yy$ and $E$ is
classically allowed (region A) if there is one or two rays connecting $\yy$ to
$\xx$ at energy $E$ in (real-valued) time, otherwise forbidden (region F).

\subsubsection{Classically allowed (region A): $b^2\ge a$}

In this case, as in \fref{f:integA}, there are two real saddle points,
with steepest descent angles $\pi/4$ for $s_-=\log t_-$
(the root with smaller real part), and $-\pi/4$ for $s_+=\log t_+$.
We parametrize contours by their real part $\alpha\in\R$,
thus
$$
s = \gamma(\alpha) := \alpha + ig(\alpha),
\qquad \mbox{ hence } \quad \gamma'(\al) = 1 + ig'(\al),
$$
where the function $g:\R\to\R$ depends on the usual parameters $a$ and $b$
\eqref{ab}.
The following analytic function $g$ makes the contour
pass through the two saddle points at angles not too far from $\pm\pi/4$,
\be
g(\alpha) = \bigg[\biggl(\frac{1}{\pi} + \frac{1}{2}\biggr)
\tan^{-1} \bigl(2(\al - \re s_-+c_-)\bigr)
- \biggl(\frac{\pi}{4} - \frac{1}{2}\bigg)
\bigg] \cdot \bigg[
\biggl(\frac{1}{\pi} + \frac{1}{6}\biggr)
\tan^{-1}\bigl(-4(\al - \re s_+ -c_+)\bigr) -
\biggl(\frac{\pi}{12} - \frac{1}{2}\biggr) \bigg]
\label{gA}
\ee
with the constants
$c_- := \half \tan(\frac{\pi^2 - 2\pi}{4 +2\pi})$
and
$c_+ := \sfrac{1}{4}\tan(\frac{\pi^2 - 6\pi}{12 +2\pi})$.
We do not claim it is optimal, but it serves our purpose well.
Examples from this family
are shown in \fref{f:integA}(c) and \fref{f:integmulti}(a).

The leftward limit $\lim_{\alpha\to-\infty}g(\alpha) = -\pi/2$ is
designed to lie in the middle of the exponentially-small region to the
left. This region has height $\pi$ due to the
$2\pi$ vertically periodic nature of the
function $e^{-s}$ which dominates as $\re s$ becomes highly negative.
To the right the period becomes three times smaller, since $e^{3s}$ is
dominant, thus we chose $\lim_{\alpha\to\infty}g(\alpha) = -\pi/6$.
Note that it is essential to enter and exit through the correct
periodic images on the left and right sides.

When the saddle points coalesce ($t_-=t_+$ at the classical
turning point, or boundary of A and F),
the angles through the saddle points become flatter, as is needed to
traverse smoothly through the small region;
see the zoom \fref{f:integmulti}(b).
However, when saddles are close to coalescing
at high $E$,
it is advantageous for accuracy to shift the contour
down enough to avoid being close to the rapid oscillations on the
real axis, whilst keeping the integrand not too large.
Hence, when $|s_+-s_-|<0.1$ we add the constant
\be
c_\tbox{shift} := -i \, \min \left[\frac{0.7}{\sqrt{E}}, 0.1 \right]
\label{shift}
\ee
to $\gamma$. The resulting shift is visible in the figure.

\subsubsection{Classically forbidden (region F): $b^2 < a$}

Things get simpler when no real rays are possible: the
saddle points $s_\pm$ split away from the real $s$ axis,
and only the one with negative imaginary part is relevant.
Let us call this point $s_0$.
There are a couple of regimes to consider;
see \fref{f:integmulti}(c)--(f).
We use the following contour when $\im s_0>-\pi/3$,
$$
g(\al) = \im s_0 +\biggl( \frac{\tan^{-1} (\al - \re s_0) - \pi}{3}
- \im s_0 \biggr)
\bigl(1 - e^{-(\al-\re s_0)^2}\bigr)
~.
$$
This has the same limits as \eqref{gA}, is
designed to pass through $s_0$ horizontally (i.e.\ $g'(\re s_0)=0$),
and is shown in \fref{f:integmulti}(c).
The need for horizontal passage is to stay below the real axis
when saddles are close to coalescing at high $E$.
As above, we also apply the shift \eqref{shift} when saddle points are close.
This is shown in the zoom \fref{f:integmulti}(d).

When $\im s_-<-\pi/3$, we are deep into the forbidden region
(thus we call the region D $\subset$ F).
It lies below the hyperbola $b = -\sqrt{a}/2$ in the $\xx$ plane,
as shown in \fref{f:integA}(a).
In region D we use the simple contour
$$
g(\al) = \frac{\tan^{-1} (\al - \re s_0) - \pi}{3}
~.
$$
This lies above all saddle points, has the same limits
as \eqref{gA}, and is shown in \fref{f:integmulti}(e).

When $b<-\sqrt{a}$, as occurs in region D with negative $E$
and close source-target distances,
the saddle points finally merge again onto the line $\im s = -i\pi/2$.
In this case we take $s_0$ to be the point with more negative real part,
and use the above contour.
This is shown in \fref{f:integmulti}(f).

\bfi  
\ig{width=3.2in}{figs/bumps.eps}
\quad
\ig{width=3.2in}{figs/conv.eps}
\caption{
(a) Magnitude of summand in \eqref{TR} along the parametrized contour,
showing three types of behavior.
For case (i) the intervals $I_1$ and $I_2$ containing the saddle
points (large dots) are shown at the top.
(b) Convergence of absolute error in $\Phi$, with respect to
the quadrature spacing $h_0$, also scaling $h_\tbox{max}=0.13\,h_0$ and
$n_\tbox{min} = 15/h_0$.
The source is $\yy=\mbf{0}$, and targets are a set of $10^4$ points
randomly distributed uniformly in angle and uniformly
in the logarithm of distance from the origin, $|\xx|\in[10^{-4},10^4]$.
For each target the set of $E$ tested is
$[-100,-30,-10,-3,-1,1,3,10,30,100,300,10^3,3\times10^3,10^4]$.
The maximum, mean, and median error is taken over the $1.5\times 10^5$
evaluations.
\label{f:bumps+conv}}
\efi

\subsection{Truncation of the integration domain}
\label{s:trunc}

With contour shapes now defined for all cases of $a$ and $b$,
we need rules to truncate the integral to a finite domain
$I\subset\R$, that is,
\be
\Phi(\xx,\yy) =
\frac{1}{4\pi} \int_{-\infty}^\infty \exp i \psi(e^s) \, ds
= \frac{1}{4\pi} \int_{-\infty}^\infty \exp i \psi(e^{\gamma(\al)})\, \gamma'(\al) d\al
\approx
\frac{1}{4\pi} \int_I \exp i \psi(e^{\gamma(\al)})\, \gamma'(\al) d\al
~.
\label{int}
\ee
For efficiency, we wish $I$
to enclose only
the parts of $\R$ where the integrand is significant, which we define
as exceeding a convergence parameter $\eps$, which we set to $10^{-14}$.
We exploit the fact that, along the contour,
the integrand decays exponentially away from saddle points.

There are three types of behavior:
(i) $I$ comprises two intervals $I_1$ and $I_2$
that may be integrated independently,
(ii) there are two saddle points but the integrand does not die to $\eps$
between them, so it must be handled as a single integration interval,
and (iii) there is one saddle point hence only a single ``bump''
and a single interval.
For case (i), for high $E$ the size of the intervals can be much smaller
than their separation, so integrating them separately is crucial.
All three cases are shown in \fref{f:bumps+conv}(a).
In region A, (i) and (ii) may occur; in regions F and D only (iii) occurs.

The recipe for regions F and D, with one saddle $s_0$,
is to initialize distances $d_1=d_2=|\re s_0|/2$ which define an
interval $[\re s_0-d_1,\re s_0+d_2]$.
If $|\psi(e^{\gamma(\re s_0-d_1)})|>\eps$ then we set $d_1$ to $\beta d_1$,
where $\beta$ is a ``jump factor'' constant, and repeat until the
left end of the interval has integrand no larger than $\eps$.
The same is done for $d_2$ on the right end.
We find that $\beta=1.3$ is a good compromise between making jumps that don't
produce an overly large interval, yet don't require too many extra
integrand evaluations.

The recipe for region A, with saddle points $s_\pm$,
is to use a crude minimization of
$|\psi(e^{\gamma(\al)})|$ in $[\re s_-,\re s_+]$, and if the minimum value exceeds
$\eps$, to use a single interval $[\re s_- - d_1,\re s_+ + d_2]$,
which is initialized and expanded as before.
Otherwise two intervals $I_1$ and $I_2$ are used centered at $s_-$ and $s_+$
respectively, and each is expanded separately, as before.
An example result is shown at the top of \fref{f:bumps+conv}.

\subsection{Choice of quadrature node spacing}

For each interval $I'$ ($=I$, $I_1$ or $I_2$),
we need rules to choose $h$, the quadrature node spacing in
the trapezoid rule approximation to \eqref{Phis},
\be
\frac{1}{4\pi} \int_{I'} \exp i \psi(e^{\gamma(\al)})\, \gamma'(\al) d\al
\;\approx\;
\frac{h}{4\pi} \sum_{hj \in I'} \exp i \psi(e^{\gamma(hj)})\, \gamma'(hj)
~.
\label{TR}
\ee
A general rule is to scale $h$ in proportion
to the minimum {\em width} of any saddle points contained in $I'$.
Let $s_0$ be such a saddle point, then we define its width
%
as
$$
\sigma(s_0) \;:=\; \left|\frac{d^2}{ds^2}\psi(e^s)|_{s=s_0}\right|^{-1/2}
~.
$$
Setting a convergence parameter $h_0$, we use a node spacing of
$$
h \;=\;
\min \left[h_\tbox{max}, \,\frac{|I'|}{n_\tbox{min}}, \,\sigma h_0 \right]  
$$
where $\sigma = \sigma(s_0)$ for the case of one saddle, or
$\sigma = \min[ \sigma(s_-), \sigma(s_+)]$ in the case of two.
The new numerical parameters here are $h_\tbox{max}$, the maximum
allowed node spacing, and $n_\tbox{min}$, the minimum allowed
number of nodes over the interval length $|I'|$.
Both are needed to prevent $h$ from become too large, since
$\sigma$ can be arbitrarily large, e.g.\ when saddles coalesce
or when $|\xx-\yy|$ is very small.

\bfi 
\quad\ig{width=6in}{figs/timevsE.eps}
\caption{
Efficiency of the numerical steepest descent algorithm
as a function of frequency parameter $E$.
(a) Mean number of quadrature nodes used, and (b)
mean number of evaluations per second.
In both graphs,
$+$ signs
indicate $E>0$ while $\square$ signs indicate $E<0$.
Solid lines are for evaluation of $\Phi$ alone while
dashed lines are for evaluation of $\Phi$ and its first partials.
The source is $\yy=\mbf{0}$, and averaging is done over $10^4$
targets randomly distributed uniformly in angle and uniformly
in the logarithm
of distance from the origin.
For the darker (blue) lines $|\xx|\in[10^{-1}, 10^4]$, while
for the lighter (green) lines only ``near'' distances
$|\xx|\in[10^{-4},10^{-1}]$ are used.
\label{f:time}}
\efi

\subsection{Derivatives of $\Phi$}
\label{s:deriv}

The formula for entries of the matrix approximation to the double-layer
operator $D$ in Sections~\ref{s:bie} and \eqref{s:nyst} requires
first derivatives of $\Phi(\xx,\yy)$ with respect to moving the source $\yy$.
These are simple to evaluate from \eqref{psi}--\eqref{Phis}
by passing the derivative through the integral to give,
\bea
\frac{\partial \Phi({\bf x},{\bf y})}{\partial y_1}
&=&
\frac{1}{4\pi}\int_{-\infty}^\infty \frac{ -i (x_1-y_1) e^{-s}}{2}
\exp i \psi(e^s)\,ds
\label{dphi1}
\\
\frac{\partial \Phi({\bf x},{\bf y})}{\partial y_2}
&=&
\frac{1}{4\pi}\int_{-\infty}^\infty \frac{-i\bigl((x_2-y_2)e^{-s} - e^{s}\bigr) }{2}
\exp i \psi(e^s)\,ds
\label{dphi2}
\eea
These may be evaluated with minimal extra effort along with $\Phi$
by including extra factors in \eqref{TR}.
Although these factors can grow exponentially in size, they do not
affect the super-exponential decay away from saddle points of the integrand.
We take care to include these factors when testing for decay of
the integrand to $\eps$ in Sec.~\ref{s:trunc}.

\subsection{Convergence and speed tests}
\label{s:phiconv}

We now test the convergence of the above scheme for $\Phi$ and its
derivatives. For true convergence, $h_0$ must shrink while $h_\tbox{max}$
also shrinks and $n_\tbox{min}$ grows; in \fref{f:bumps+conv}(b) we
perform this test, over the large range of $E$ and $\xx$ parameters
used in \fref{f:bumps+conv}(b), $150000$ in total.
The upper graph shows that a worst-case absolute error around
$3 \times 10^{-11}$ for $h_0=0.35$, $h_\tbox{max}=0.05$ and
$n_\tbox{min}=43$, which we thus find acceptable and fix as our standard choices.
In fact, the lower graphs show that typical accuracies are much better,
being 13 to 15 digits.
\begin{rmk} 
It is known that 24 nodes is sufficient to integrate the Gaussian
via the trapezoid rule to double precision accuracy, e.g.\
\cite[Remark~2]{nufft}.
Nearly twice this is needed to guarantee accuracy
in our setting,
we believe due to distortion around the saddle
from an exactly quadratic phase function, and the overshoot in interval
size due to $\beta$ exceeding 1.
\end{rmk}

Note that we test {\em absolute} not {\em relative} errors in $\Phi$:
we believe that this is what is relevant for solution of BIEs,
and support this claim in the next section.
Since $\Phi$ is exponentially small in the forbidden region, demanding
high relative error would require more effort, and is unnecessary.


In \fref{f:time}(a) we test the mean number of nodes $n$ used
for the contour integral over the test set, splitting the data for
near distances $|\xx-\yy|<0.1$, and for $|\xx-\yy|\ge 0.1$.
For the latter, only around 100 nodes is needed, with a slight
decrease at large $E$.
For near distances (hence $a$ is small),
the saddle point $s_-$ moves leftwards, and the 
width of the significant region around it grows
as shown in \fref{f:integmulti}(a) and (f).
We observe that here $n$ grows like $\log 1/|\xx-\yy|$.
This explains $n$ in the 200--600 range for near distances.
The peak at $E=10$ is due to $I$ being a single large interval
containing two saddles, one of which has a small width which demands
a small $h$.

We implemented the code in C with OpenMP and a MEX interface
(constructed via {\tt Mwrap}) to MATLAB
(version 2012b), and tested its speed on a desktop
workstation with two quad-core Intel Xeon E5-2643 CPUs at 3.3 GHz.%
\footnote{We also tested our codes on a laptop with
a quad-core Intel i7-3720QM at 2.6 GHz
and found speeds 70\%-100\% of those reported.}
\fref{f:time}(b) shows that at most $E$ values we achieve a mean rate
exceeding $10^5$ evaluations per second
(where we count $\Phi$ and its two derivatives as a single evaluation).
For near distances this drops to around 60\% of that.
Dips at various $E$ ranges are explained by the increased $n$.
The CPU time is believed to be dominated by calls to the complex exponential,
and arctangent, functions; memory usage is very small.

\bfi
\qquad
\ig{width=3in}{figs/Airyu_m260.eps}
\ig{width=3in}{figs/Airyconv.eps}
\caption{
(a) Plot of interior Dirichlet solution $u$ as
evaluated by \eqref{ueval} given the density from solving the BIE,
with $N=260$ (boundary nodes $s_i$ shown as dots).
(b) Convergence of interior BVP solution:
maximum absolute error
($\square$ signs)
over 100 interior points chosen randomly to lie inside
a copy of $\pO$ scaled by 0.8, so points are not too close to $\pO$;
boundary error $\|S (\partial u/\partial n)^- - (D + \half I) u^-\|_{l_2}$
($+$ signs) for the Green's representation formula with the analytically known
data $\partial u/\partial n$ and $u$ on $\pO$. See \sref{s:intnum}.
\label{f:Airyconv}
}
\efi

\section{Performance of the boundary value solver}
\label{s:num}

\subsection{Convergence for interior Dirichlet BVP}
\label{s:intnum}

To solve the interior BVP corresponding to \eqref{lhelm}--\eqref{ourk},
firstly
the parametrization of the curve $\pO$, and a number $N$ of boundary nodes,
is chosen.
Then the data vector $g_i = -2f(s_i)$, $i=1,\ldots,N$ is filled,
and the Nystr\"om matrix $A$ is filled using
\eqref{Asmooth} for entries away from the diagonal and
the Alpert correction of \sref{s:nyst} close to the diagonal,
with kernel
$K(t,s) = -2 \left(\partial \Phi(\zz(t),\zz(s))/\partial \nn_{\zz(s)}\right)
|\zz'(s)|$,
appropriate for the BIE \eqref{bieint}.
The dense linear system \eqref{linsys} is solved by direct Gaussian elimination
to get the density $\{\tau_j\}_{j=1}^N$, and the solution
evaluated by direct summation \eqref{ueval}.

We test convergence using Dirichlet data $f=u|_\pO$ coming from the
analytic separation of variables solution
\be
u(\xx) = \cos(\sqrt{E} x_1) \mbox{Ai}(-x_2)~,
\label{uex}
\ee
where Ai is the Airy function of the first kind, for $E=10$,
with $\pO$ a smooth ``trefoil'' domain
given by the polar function $r(\theta) = 5 + 1.5 \cos(3\theta)$,
about 5 wavelengths across.
\fref{f:Airyconv}(a) shows
the domain, boundary nodes, and resulting BIE solution constructed via
\eqref{ueval}.
In \fref{f:Airyconv}(b) we observe exponential convergence of the
absolute solution error
at interior points; we believe this rate is limited by the
distance of the nearest points to $\pO$ rather than the convergence of
the density. At $N=260$ we reach 11-digit accuracy (the solution $u$
has maximum size around $0.5$).
Filling $A$ took 5 seconds, and
the evaluation of $u$ at 32841 interior points used to plot \fref{f:Airyconv}(a)
took 70 seconds.

As an independent check of the discretization of the operators $S$ and $D$
on the boundary, by Green's representation formula
\cite[(2.5)]{coltonkress} \cite[Thm.~6.10]{mclean},
$$
u = {\cal S}  (\partial u/\partial n)^- - {\cal D}u^-
~,\qquad \mbox{ in }
\Omega~,
$$
and thus taking the evaluation point to $\pO$ from inside and applying
\eqref{jr1}--\eqref{jr2},
the boundary function $S (\partial u/\partial n)^- - (D + \half I) u^-$
should vanish. We show convergence of its norm in 
\fref{f:Airyconv}(b); it is consistent with the high order of the Alpert
scheme, and reaches 11-digit accuracy (each term, e.g.\
$(D + \half I) u^-$, has norm 1.5).

\begin{figure}[ht!] 
\centering
\includegraphics[width=3.5in]{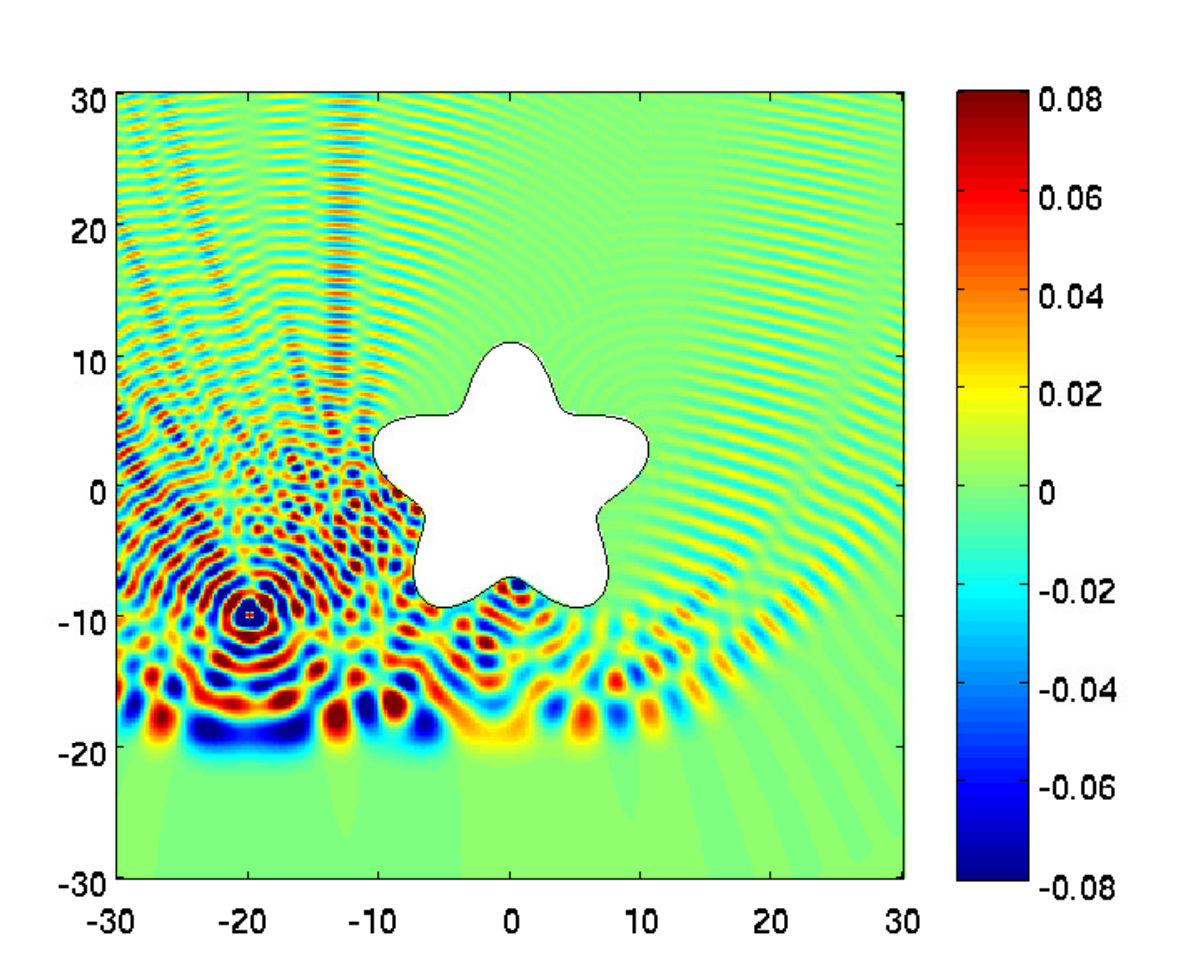}
\caption{Real part of total wave $u+\ui$ for a Dirichlet scattering problem at $E=20$,
with $\ui(\xx) = \Phi(\xx,\xx_s)$ with $\xx_s = (-20,-10)$.
Around 11 digit accuracy relative to the typical solution size
is achieved at $N=500$; see \sref{s:extnum}.
\label{f:scatt}
}
\end{figure}

\begin{table}[ht!]  
\hspace{.8in}
\begin{tabular}{ c | c | c | c | c}
$N$ & $A$ fill time	(s) & dense solve time (s)	&
evaluation time per target (s) & error
\\
\hline
         200    &    3.8  &   0.004  &  0.0012  & 4.1e-05\\
          300   &    5.3 &    0.007  &  0.0018 &  6.3e-08\\
          400   &    7.9  &   0.019  &  0.0024 &  2.9e-10\\
          500   &    10.0  &   0.023  &  0.0030 &  2.6e-12\\
          600   &    12.6  &   0.028  &  0.0036  &         ---\\
\hline
\end{tabular}
\caption{
Convergence and timing for the small scattering problem shown in
\fref{f:scatt} and described in \sref{s:extnum}.
Evaluation time is for the solution $u$
via \eqref{ueval}, and is the mean value over a coarse grid covering the
region shown. 
Error is the maximum absolute
error over 100 points lying uniformly on a circle of radius 12
(i.e.\ a closest distance of 1 from $\pO$),
estimated by comparing to the converged values for $N=600$.
\label{t:scatt}}
\end{table}

\begin{figure} 
\centering
\includegraphics[width=6in]{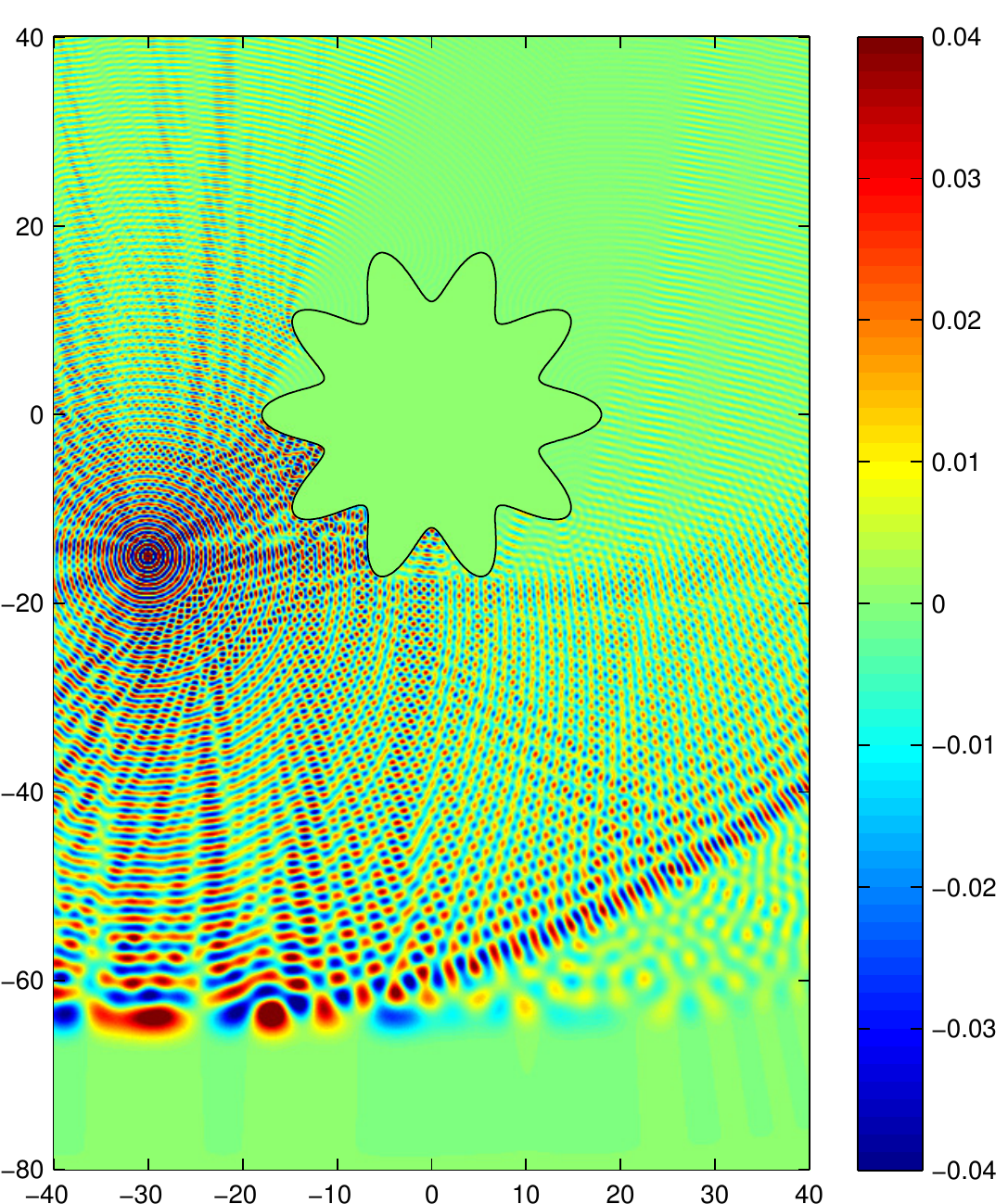}
\caption{Real part of total wave $u+\ui$ for a Dirichlet scattering problem at $E=65$,
with $\ui(\xx) = \Phi(\xx,\xx_s)$ with $\xx_s = (-30,-15)$.
Around 11 digit accuracy relative to the typical solution size
is achieved at $N=2000$; see \sref{s:extnum}.
}
\label{f:scattbig}
\end{figure}

\begin{table}  
\hspace{.8in}
\begin{tabular}{ c | c | c | c | c}
$N$ & $A$ fill time	(s) & dense solve time (s)	&
evaluation time per target (s) & error
\\
\hline
         1200    &   26   &  0.09  &  0.008 &  5.7e-08\\
         1600    &   37   &   0.20  &   0.011 &  2.1e-10\\
         2000    &   46   &   0.31  &   0.013 &  2.9e-12\\
         2400    &   58   &   0.42  &   0.016 &          ---\\
\hline
\end{tabular}
\caption{
Convergence and timing for the large scattering problem shown in
\fref{f:scattbig} and described in \sref{s:extnum}.
Error is the maximum absolute
error over 100 points lying uniformly on a circle of radius 19
(i.e.\ a closest distance of 1 from $\pO$),
estimated by comparing to the converged values for $N=2400$.
\label{t:scattbig}}
\end{table}

\subsection{Convergence and timing for scattering problems}
\label{s:extnum}

For a scattering problem with given incident wave $\ui$,
as explained in the introduction,
the exterior BVP \eqref{lhelm}--\eqref{bc} is solved with $f=-\ui$.
We solve the combined-field
BIE \eqref{bie} similarly to the interior case summarized in
\sref{s:intnum},
except with data $g_i = 2f(s_i)$, $i=1,\ldots,N$
and kernel
$K(t,s) = 2 \left[ \partial \Phi(\zz(t),\zz(s))/\partial \nn_{\zz(s)} 
-i\eta\Phi(\zz(t),\zz(s)) \right]|\zz'(s)|$.
We test with two smooth scatterers which are chosen to be large
enough (diameter of order $E$)
that the wavelength has sizeable vertical variation across the object.

We first test a small example, at $E=20$, with shape
given by the polar function $r(\theta) = 9+2\sin(5\theta)$,
which is about 15 wavelengths across at the typical wavenumber $\sqrt{E}$.
The incident wave is due to a single nearby source at $\xx_s$.
The convergence in \tref{t:scatt} is consistent with exponential.
The solution time is entirely dominated by evaluations of $\Phi$,
and is consistent with 
$10^5$ evaluations per second.
The fill time has not yet reached its asymptotic $O(N^2)$, since
the $O(30 N)$ Alpert correction entries are expensive due to their small
source-target distances.
The dense linear system solve is $O(N^3)$, but insignificant in comparison.
A strict $O(N^2)$ overall scaling is recovered via using an
iterative solver; we applied GMRES \cite{gmres}
and found that 43 iterations were
required for  a residual of $10^{-12}$.
The total wave solution,
shown in \fref{f:scatt}, took 4 minutes to evaluate at
84089 grid points, i.e.\ around 350 target points per second.
Notice that the waves bend, and do not propagate below $x_2=-E=-20$.

Finally, we test a similar but more challenging case, at $E=65$,
with shape $r(\theta) = 15+3\cos(10\,\theta)$,
about 50 wavelengths across.
The convergence and timing
is in \tref{t:scattbig} and the total wave solution
is shown in \fref{f:scattbig}.%
\footnote{Curiously, fill times on the laptop were slightly faster than
for the desktop, but evaluation times were only 70\% as fast.}
Again, 11 digits of accuracy is achieved
at $N=2000$
(relative to the typical size of $u$, which is of order 0.1).
For GMRES, 59 iterations were needed to reach a residual of $10^{-12}$,
showing scarcely any growth from the lower-frequency example.
The plot in \fref{f:scattbig} took around 50 minutes
for 226000 target points, i.e.\ about 80 target points per second.
The parabolic turning point
for the source is clearly visible, as well as waves of lower amplitude
that have been scattered and hence escape this parabola.

\section{Conclusion and discussion}
\label{s:conc}

We have presented an efficient scheme for high-frequency
scattering from
smooth objects embedded in a stratified medium in which the
inverse square of wave speed varies linearly in the vertical coordinate
(the ``gravity Helmholtz equation'').
Our high efficiency and accuracy comes from combining
numerical saddle point integration for an integral
representation of the fundamental solution $\Phi$,
with a boundary integral formulation and
high-order quadrature rules for the singular kernels,
allowing a problem 50 wavelengths in diameter to be solved to 11 digit
accuracy in less than a minute on a desktop or laptop.
Our detailed study of the saddle points (and their
connection to classical ray dynamics) allows
around $10^5$ evaluations of $\Phi$ per second, {\em independent}
of the wavenumber.
Solution cost is dominated by evaluations of $\Phi$,
which is trivially parallelizable,
and, once the matrix is filled, multiple incident waves at the same $E$
can be solved with negligible extra cost.
The scheme is strictly $O(N^2)$ when an iterative solver (such as GMRES) is
used; here convergence is rapid due to the second-kind formulation.

In addition we placed the boundary value problem in the unbounded
stratified medium on a more rigorous footing by deriving radiation
conditions (\dref{d:rbcs}) such that the solution is unique.
It remains to prove 
the conjecture
that these are indeed satisfied by our causal $\Phi$;
this would give an existence proof for the BVP (\rref{r:exist}).

In terms of future research,
the sound-hard and transmission problems \cite{coltonkress} are straightforward
variants, as 
is the restriction to a half-space (reflected rays would need to be considered).
The BIE operators we have constructed
are also ideal for applying our medium's radiation boundary conditions to
finite-element solvers.
When the obstacle is no more than around 100 wavelengths across,
much acceleration is possible:
a kernel-independent fast multipole method (FMM)
\cite{kifmm} could be used to apply $A$ in each GMRES iteration,
or a fast direct solver \cite{m2011_1D_survey};
both would evaluate only $O(N)$ as opposed to $O(N^2)$ matrix elements.
The former would also be much faster than direct summation
for evaluation of $u$.
We hope that our numerical saddle point integration techniques
might prove useful for other (special) functions.
The generalization to 3D will be easy, since $\Phi$ may be expressed
directly using Airy functions \cite{bracher}.
A generalization to {\em quadratic} variation of the inverse
square wave speed is also possible
since the time-dependent Schr\"odinger Green's function is still
known analytically \cite{hellerhouches}; this could be used for
modeling guiding channels in underwater acoustics.

Documented C/OpenMP and MATLAB/MEX codes, with which all tests were performed,
are freely available at
{\tt http://math.dartmouth.edu/$\sim$ahb/software/lhelmfs.tgz}

\section*{Acknowledgements}
We have benefited from helpful discussion with
Simon Chandler-Wilde, Erik van Erp, and Nick Trefethen.
AHB is grateful for support from NSF grant DMS-1216656.
BJN is grateful for support from the
Paul K. Richter and Evalyn E. Cook Richter Memorial Fund.
The work of JMM and BJN was performed while at the Department of Mathematics
at Dartmouth College.

\appendix

\section{Proof of Theorem~\ref{t:rc}: uniqueness of radiative solutions}
\label{a:rc}

We adapt the radial methods of proof of \cite[Thm~3.7]{coltonkress}
to handle the very different asymptotic behaviors in horizontal and vertical
directions.
First we need the following Cartesian version of Rellich's far field
decay condition \cite[Lemma~2.11]{coltonkress}.
\begin{lem}[Cartesian Rellich]
Let $u$ satisfy \eqref{lhelm}, with medium \eqref{ourk}, in the complement
of a bounded domain $\Omega$,
and
\be
\lim_{x_2\to +\infty} k(x_2) \infint |u(x_1,x_2)|^2 dx_1 \; = \; 0
~.
\label{rell}
\ee
Then $u=0$ in $\eO$.
\label{l:rell}
\end{lem}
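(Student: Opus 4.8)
The plan is to adapt the separation-of-variables argument behind the classical Rellich lemma \cite[Lemma~2.11]{coltonkress}, replacing the spherical-harmonic expansion by a Fourier transform in the transverse coordinate $x_1$, so that the radial Bessel equation is replaced by an Airy equation in $x_2$. First I would reduce to a half-plane: since $\Omega$ is bounded, fix $R$ with $\Omega\subset\{|\xx|<R\}$, so that $u$ solves \eqref{lhelm} throughout the half-plane $\{x_2>R\}$; because the coefficients $E+x_2$ of the PDE are entire, $u$ is real-analytic, so unique continuation promotes $u\equiv 0$ on $\{x_2>R\}$ to $u\equiv 0$ on the (connected) exterior $\eO$. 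On $\{x_2>R\}$ I would take the partial Fourier transform $\hat u(\xi,x_2)=\infint u(x_1,x_2)e^{-i\xi x_1}\,dx_1$; this presupposes $u(\cdot,x_2)\in L^2(\R)$ for $x_2>R$ and that the transform commutes with $\partial_{x_2}$, which is tacit in \eqref{rell} and, in the application to Theorem~\ref{t:rc}, is furnished by the horizontal decay \eqref{LRRC}. The PDE then becomes the ordinary differential equation $\hat u_{x_2 x_2}+(x_2+E-\xi^2)\hat u=0$, whose general solution is $\hat u(\xi,x_2)=\alpha(\xi)\,\Ai(\xi^2-E-x_2)+\beta(\xi)\,\Bi(\xi^2-E-x_2)$ for coefficient functions $\alpha,\beta$.

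Next I would feed the large-$x_2$ asymptotics of the Airy functions into the hypothesis \eqref{rell}. Writing $\mu=\mu(\xi,x_2):=x_2+E-\xi^2>0$ and $\theta=\theta(\xi,x_2):=\tfrac23\mu^{3/2}+\tfrac{\pi}{4}$, one has, for $\xi$ in any fixed band $|\xi|\le M$ and $x_2\to+\infty$, $\Ai(-\mu)\sim\pi^{-1/2}\mu^{-1/4}\sin\theta$ and $\Bi(-\mu)\sim\pi^{-1/2}\mu^{-1/4}\cos\theta$, hence $|\hat u(\xi,x_2)|^2\sim\pi^{-1}\mu^{-1/2}\,|\alpha(\xi)\sin\theta+\beta(\xi)\cos\theta|^2$. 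The weight $k(x_2)=\sqrt{E+x_2}$ is exactly what balances the $\mu^{-1/2}$ decay, since $k(x_2)^2=\mu+\xi^2$, so that $k(x_2)|\hat u(\xi,x_2)|^2-\pi^{-1}|\alpha(\xi)\sin\theta+\beta(\xi)\cos\theta|^2\to0$ uniformly on $|\xi|\le M$ as $x_2\to\infty$. Using Parseval, $2\pi\,k(x_2)\infint|u(x_1,x_2)|^2\,dx_1=k(x_2)\int_{\R}|\hat u|^2\,d\xi\ \ge\ k(x_2)\int_{-M}^{M}|\hat u|^2\,d\xi$, and restricting to the fixed band $|\xi|\le M$ keeps the Airy asymptotic corrections and the factor $\mu/(\mu+\xi^2)$ uniformly under control, so $k(x_2)\int_{-M}^M|\hat u|^2\,d\xi-\pi^{-1}\int_{-M}^{M}|\alpha\sin\theta+\beta\cos\theta|^2\,d\xi\to0$.

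It remains to evaluate $\lim_{x_2\to\infty}\int_{-M}^{M}|\alpha\sin\theta+\beta\cos\theta|^2\,d\xi$. Expanding, $|\alpha\sin\theta+\beta\cos\theta|^2=\tfrac12(|\alpha|^2+|\beta|^2)+\tfrac12(|\beta|^2-|\alpha|^2)\cos2\theta+\re(\alpha\bar\beta)\sin2\theta$; the first term is independent of $x_2$, while the oscillatory terms carry the phase $2\theta=\tfrac43\mu^{3/2}+\tfrac{\pi}{2}$, whose $\xi$-derivative $-4\xi\,\mu^{1/2}$ has size $\gtrsim\varepsilon\sqrt{x_2}$ on $\varepsilon\le|\xi|\le M$, and whose only stationary point $\xi=0$ is nondegenerate with $|\partial_\xi^2(2\theta)|\gtrsim\sqrt{x_2}$. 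A van der Corput / non-stationary phase estimate --- legitimate because $\alpha,\beta$ inherit enough regularity in $\xi$ from the smooth solution $u$ --- then gives $\int_{-M}^{M}(|\beta|^2-|\alpha|^2)\cos2\theta\,d\xi\to0$ and $\int_{-M}^{M}\re(\alpha\bar\beta)\sin2\theta\,d\xi\to0$. Combining with the previous paragraph, $k(x_2)\int_{-M}^M|\hat u|^2\,d\xi\to\tfrac{1}{2\pi}\int_{-M}^M(|\alpha|^2+|\beta|^2)\,d\xi$; but by hypothesis \eqref{rell} the left side tends to $0$, so $\int_{-M}^M(|\alpha|^2+|\beta|^2)\,d\xi=0$. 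Letting $M\to\infty$ gives $\alpha\equiv\beta\equiv0$, hence $\hat u\equiv0$, hence $u\equiv0$ in $\{x_2>R\}$, and unique continuation then yields $u\equiv0$ in $\eO$.

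The step I expect to be the main obstacle is precisely this last oscillatory-integral cancellation. In the classical Rellich proof one works one spherical-harmonic mode at a time and cancels the interference between the two radial solutions by choosing a sequence of radii along which the cross term is extremal; here there is a continuum of transverse modes $\xi$ and \eqref{rell} controls only the $\xi$-integrated quantity, so no such mode-by-mode choice is available, and one must instead exploit the $\mu^{3/2}$ oscillation of the Airy asymptotics \emph{in $\xi$} to kill the cross terms uniformly in $x_2$. A second, more bookkeeping, difficulty is making the partial Fourier transform and the regularity of $\alpha,\beta$ rigorous, i.e.\ nailing down the function space in which ``radiative'' is to be interpreted; the device of arguing on bounded bands $|\xi|\le M$ --- where $\Bi(-\mu)$ is bounded and every estimate is uniform --- and only afterwards sending $M\to\infty$ is what keeps this manageable.
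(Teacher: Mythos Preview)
Your proposal is correct and follows the same skeleton as the paper: Fourier transform in $x_1$, reduction to the Airy ODE in $x_2$, large-argument asymptotics of $\Ai$ and $\Bi$, and unique continuation from a half-plane. The paper is in fact terser than you at precisely the step you flag as the main obstacle: it simply asserts that if $\alpha$ or $\beta$ were nonzero on an open set then the limit \eqref{rellft} ``would be positive,'' without addressing the oscillatory cross terms at all. So you are being more careful, not less.

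That said, your van der Corput argument in $\xi$ is more machinery than the step requires, and it forces you to justify $C^1$ (or BV) regularity of $\alpha,\beta$ in $\xi$ --- the very bookkeeping you worry about. A lighter route, needing only $\alpha,\beta\in L^2_{\mathrm{loc}}$, is to average in $x_2$ rather than exploit oscillation in $\xi$: since $F(x_2):=k(x_2)\infint|\hat u|^2\,d\xi\to 0$, so does $T^{-1}\int_T^{2T}F(x_2)\,dx_2$; restricting to $|\xi|\le M$, swapping the order by Fubini, and using that for each fixed $\xi$ the $x_2$-average of $|\alpha\sin\theta+\beta\cos\theta|^2$ tends to $\tfrac12(|\alpha|^2+|\beta|^2)$ (because $\partial_{x_2}\theta\to\infty$), dominated convergence gives $\int_{-M}^M(|\alpha|^2+|\beta|^2)\,d\xi=0$. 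This avoids both obstacles you identify and is closer in spirit to the mode-by-mode radial-sequence trick in the classical Rellich proof.
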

\begin{proof}
For sufficiently large $x_2$,
using the horizontal Fourier transform $\hat{u}(\xi,x_2) = \frac{1}{2\pi}
\infint u(x_1,x_2) e^{i\xi x_1} dx_1$,
the PDE becomes, for each $\xi\in\R$, an ODE in $x_2$,
$$
\partial_{x_2}^2 \hat{u}(\xi,x_2) + (x_2+E - \xi^2)\hat{u}(\xi,x_2) = 0
~.
$$
This is a shifted Airy's equation, thus, in terms of Airy functions
Ai and Bi,
$$
\hat{u}(\xi,x_2) = \alpha(\xi) \Ai (-x_2-E+\xi^2) + 
\beta(\xi) \Bi (-x_2-E+\xi^2)
$$
By unitarity of the Fourier transform, \eqref{rell} implies
\be
\lim_{x_2\to +\infty} k(x_2) \infint |\hat{u}(\xi,x_2)|^2 d\xi \; = \; 0
\label{rellft}
\ee
By the asymptotics $\Ai(-z) \sim \pi^{-1/2}z^{-1/4} \cos(2z^{3/2}/3 - \pi/4)$
and $\Bi(-z) \sim -\pi^{-1/2}z^{-1/4} \sin(2z^{3/2}/3 - \pi/4)$,
as $z\to+\infty$ \cite[(9.7.9), (9.7.11)]{dlmf},
and $k(x_2) \sim \sqrt{x_2}$,
it follows that if $\alpha(\xi)$ or $\beta(\xi)$ were nonzero on
any open subset of $\R$, then the limit \eqref{rellft} would be positive.
Thus $\alpha$ and $\beta$ are zero except possibly at a set of measure zero.
Taking the inverse Fourier transform,
$u(x_1,x_2) = 0$ for all $x_2$ sufficiently large.
Since \eqref{lhelm} has analytic coefficients, its solutions are analytic
in both variables.
By unique continuation, $u=0$ in all of $\eO$.
\end{proof}

Next we need flux conservation, which states that, for any bounded
region $D\subset\R^2$ with boundary $\partial D$ in which $u$ satisfies
\eqref{lhelm} with $k(x_2)^2$ real,
\be
- \im \int_{\partial D} u \overline{u_n}\, ds \; = \; 0 ~,
\label{flux}
\ee
where $u_n = \partial u/\partial n$ is the outward-pointing normal derivative.
The left-hand side may be interpreted as the wave energy flux
exiting the domain $D$.
This follows simply from taking the imaginary part of Green's first identity
$$
\int_{\partial D} u \overline{u_n} \, ds \; = \;
\int_D u \Delta \overline{u}  + |\nabla u|^2 \, d\xx
$$
after inserting $\Delta u = -k(x_2)^2 u$ from the PDE.

We now prove a result analogous to \cite[Thm.~2.12]{coltonkress}.
\begin{thm}[Non-negative incoming flux]
Let $\Omega\subset\R^2$ be a bounded domain. Let
$u$ solve \eqref{lhelm} with medium \eqref{ourk} in $\eO$,
be radiative according to Definition~\ref{d:rbcs}, and
have non-negative incoming flux, i.e.,
$$\im \int_\pO u \overline{u_n}\, ds \; \ge \; 0 ~.$$
Then $u=0$ in $\eO$.
\label{nonnegflux}
\end{thm}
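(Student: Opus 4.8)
The plan is to adapt the Rellich-type argument of \cite[Thm.~2.12]{coltonkress} to our anisotropic geometry, replacing the large circle used there by a large rectangular box and then invoking the Cartesian Rellich result, Lemma~\ref{l:rell}. Choose a box $Q_{L,H,M}:=(-L,L)\times(-H,M)$ large enough to contain $\overline{\Omega}$ and apply flux conservation \eqref{flux} to $D=Q_{L,H,M}\setminus\overline{\Omega}$. Since the outward normal of $D$ along $\pO$ is $-\nn$, this shows that $F:=\im\int_\pO u\,\overline{u_n}\,ds$ equals the total outward flux of $u$ through $\partial Q_{L,H,M}$, i.e.\ the sum of the top term
\[
T(L,M)=\im\int_{-L}^{L} u\,\overline{\partial_{x_2}u}\,dx_1\Big|_{x_2=M},
\]
the bottom term $B(L,H)=-\im\int_{-L}^{L}u\,\overline{\partial_{x_2}u}\,dx_1\big|_{x_2=-H}$, and the two vertical-side terms, with integrand $u\,\overline{\partial_{x_1}u}$ integrated over $x_2\in(-H,M)$ at $x_1=\pm L$.

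First I would dispose of the bottom and lateral faces. By Cauchy--Schwarz $|B(L,H)|\le\half\infint(|u|^2+|\partial_{x_2}u|^2)\,dx_1\big|_{x_2=-H}$, which is finite for large $H$ and tends to $0$ as $H\to\infty$ uniformly in $L$ by \eqref{DRC}; and with $\ell:=\max(H,M)$ each side term is bounded by $\half\int_{-\ell}^{\ell}(|u|^2+|\partial_{x_1}u|^2)\,dx_2\big|_{x_1=\pm L}$, whose inner limit $L\to\infty$ is controlled by \eqref{LRRC} by a quantity $I^\pm(\ell)\to0$. The delicate point is the order of limits dictated by the iterated form of \eqref{DRC}--\eqref{LRRC}: one sends $L\to\infty$ first at fixed $H,M$, then $H\to\infty$ (so that $\ell=H$ and $I^\pm(\ell)\to0$), keeping $M$ fixed until the end. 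Feeding these bounds into the identity $F=T(L,M)+B(L,H)+(\text{sides})$ — in which $B$ and the side terms stay bounded uniformly in $L$ while $F$ is a fixed finite number — first forces $u(\cdot,x_2)\in L^2(dx_1)$ on the relevant horizontal slices (otherwise $T(L,M)$ would diverge), hence the truncated slice integrals converge, and then yields the clean identity $F=T(\infty,M)$ for every sufficiently large $M$, where $T(\infty,M)=\im\infint u\,\overline{\partial_{x_2}u}\,dx_1\big|_{x_2=M}$.

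Next I would use \eqref{UPRC} on the top slice. Writing $\partial_{x_2}u=ik(x_2)u+e$ with $\|e(\cdot,x_2)\|_{L^2(dx_1)}^2=o(k(x_2))$ as $x_2\to+\infty$, one computes $\im\bigl(u\,\overline{\partial_{x_2}u}\bigr)=-k(x_2)|u|^2+\im(u\,\overline{e})$, so that $T(\infty,M)=-k(M)\,A(M)+\im\infint u\,\overline{e}\,dx_1$ with $A(M):=\infint|u|^2\,dx_1\big|_{x_2=M}$, the cross term being bounded by $A(M)^{1/2}\|e(\cdot,M)\|_{L^2}=A(M)^{1/2}\,o\bigl(\sqrt{k(M)}\bigr)$ via Cauchy--Schwarz. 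Combining with $F=T(\infty,M)$ and setting $x_M:=\sqrt{k(M)A(M)}\ge0$ and $\epsilon_M:=\|e(\cdot,M)\|_{L^2}/\sqrt{k(M)}\to0$, this reads $F+x_M^{\,2}\le x_M\,\epsilon_M$ for all large $M$. Since $F\ge0$, the discriminant of this quadratic inequality in $x_M$ must be nonnegative, i.e.\ $\epsilon_M^{\,2}\ge4F$, which (letting $M\to\infty$) forces $F=0$; and then $x_M\le\epsilon_M\to0$, i.e.\ $\lim_{M\to\infty}k(M)\infint|u(x_1,M)|^2\,dx_1=0$. This is exactly hypothesis \eqref{rell} of the Cartesian Rellich Lemma~\ref{l:rell}, whose conclusion $u=0$ in $\eO$ completes the proof.

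I expect the main obstacle to be the second step: carefully justifying the nested limiting procedure consistent with the precise (iterated) form of the radiation conditions \eqref{DRC}--\eqref{LRRC}, and extracting en route the $L^2(dx_1)$ square-integrability of $u$ on horizontal slices so that $T(\infty,M)$, the decomposition $\partial_{x_2}u=ik u+e$, and the estimate on $\epsilon_M$ are all meaningful. Once that bookkeeping is in place, the computation on the top slice, the quadratic-inequality argument, and the appeal to Lemma~\ref{l:rell} are routine.
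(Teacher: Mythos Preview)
Your proposal is correct and follows essentially the same route as the paper: apply flux conservation \eqref{flux} on a punctured rectangle, use \eqref{DRC}--\eqref{LRRC} with Cauchy--Schwarz to kill the bottom and side contributions, then combine the surviving top flux with \eqref{UPRC} to obtain the hypothesis \eqref{rell} of Lemma~\ref{l:rell}. The only notable difference is on the top slice: the paper expands the square $\frac{1}{k}|\partial_{x_2}u-iku|^2=\frac{1}{k}|\partial_{x_2}u|^2+k|u|^2+2\,\im(u\,\overline{\partial_{x_2}u})$ directly, so that \eqref{UPRC} together with $F=T(\infty,M)$ immediately gives a nonnegative quantity with limit $-2F\le0$, whence \eqref{rell}. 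Your decomposition $\partial_{x_2}u=iku+e$ and the ensuing quadratic-inequality argument reach the same conclusion (and additionally extract $F=0$) but is a slightly longer path to the same algebra; your more explicit bookkeeping on the order of limits and on slice $L^2$-integrability is a welcome addition over the paper's terse treatment.
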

\begin{proof}
Expanding the square in \eqref{UPRC} gives
$$
\lim_{x_2\to+\infty} \lim_{M\to\infty} \int_{-M}^M \frac{1}{k(x_2)}
\left|\frac{\partial u}{\partial x_2}\right|^2 + k(x_2)|u|^2 \,dx_1
+
2\im \int_{-M}^M u\frac{\partial \overline{u}}{\partial x_2} \, dx_1 
\; = \; 0~.
$$
Applying \eqref{flux} to the punctured rectangle
$(-M,M)\times(-x_2,x_2) \backslash \overline{\Omega}$,
by the decay conditions \eqref{DRC}--\eqref{LRRC} and Cauchy-Schwarz the
flux contributions from the bottom, left, and right sides vanish,
giving
$$
\lim_{x_2\to+\infty} \lim_{M\to\infty} \int_{-M}^M \frac{1}{k(x_2)}
\left|\frac{\partial u}{\partial x_2}\right|^2 + k(x_2)|u|^2 \,dx_1
 = - 2 \im \int_{\partial D} u \overline{u_n}\, ds
$$
analogous to \cite[(2.10)]{coltonkress}.
By the assumption of the theorem, the right-hand side is non-positive,
so \eqref{rell} holds, and Lemma~\ref{l:rell} completes the proof.
\end{proof}

Finally, to prove the uniqueness of the radiative solution to the
Dirichlet BVP \eqref{lhelm}--\eqref{bc},
we need only that if $u=0$ on $\pO$, and $u$ is a radiative solution,
then $u=0$ in $\eO$. Given the remark in the proof \cite[Thm~3.7]{coltonkress}
about the convergence of the normal derivative,
the incoming flux is zero and the result follows from Theorem~\ref{nonnegflux}.
We suspect that the above generalizes easily to more general
profiles $k(x_2)$.

\bibliographystyle{elsarticle-num} 
\bibliography{alex}
\end{document}